\documentclass[a4paper,11pt]{article}

\usepackage{kpfonts}
\usepackage[utf8]{inputenc}
\usepackage[margin=3cm]{geometry}

\usepackage{graphicx}
\usepackage{subcaption}
\usepackage{amsmath, amssymb, amsthm}
\usepackage{mathtools}

\usepackage{microtype}
\usepackage{booktabs}
\usepackage{enumerate,paralist}
\usepackage{xspace}
\usepackage{comment}
\usepackage{xcolor}
\usepackage{array}
\usepackage[numbers,sort]{natbib}
\definecolor{linkblue}{HTML}{2B4C7E}
\usepackage[pdfpagelabels,colorlinks,allcolors=linkblue]{hyperref}
\usepackage[nameinlink,capitalize,noabbrev,sort]{cleveref}

\theoremstyle{plain}

\newtheorem{theorem}{Theorem}
\newtheorem{lemma}{Lemma}

\newtheorem{fact}{Fact}
\crefname{fact}{Fact}{Facts}
\Crefname{fact}{Fact}{Facts}

\theoremstyle{definition}  
\newtheorem{problem}{Problem}

\graphicspath{{pics/}}

\definecolor{defblue}{rgb}{0.121,0.47,0.705}
\definecolor{defred}{rgb}{0.7,0.37,0.37}
\definecolor{collocal}{rgb}{0.894,0.102,0.110}
\definecolor{defgreen}{rgb}{0.01,0.65,0.20}
\definecolor{brightmaroon}{rgb}{0.76, 0.13, 0.28}

\newcommand{\df}[1]{\textcolor{brightmaroon}{\emph{#1}}}
\newcolumntype{C}[1]{>{\centering\arraybackslash}p{#1}}
\newcommand{\yes}{{\it \textcolor{defgreen}{yes}}}

\newcommand{\oops}{\texttt{OOPS}\xspace}
\newcommand{\NP}{\ensuremath{\mathsf{NP}}\xspace}

\begin{document}

\title{Smallest Cubic Non-1-Planar Graphs}
\author{Sergey Pupyrev\\
\small\texttt{spupyrev@gmail.com}}
\date{}
\maketitle

\begin{abstract}
    A graph is 1-planar if it has a drawing in which every edge is crossed
    at most once. We show that the smallest cubic non-1-planar graphs
    have $30$ vertices. Two such graphs are the \texttt{Tutte-Coxeter}
    graph of girth eight and a graph of girth seven that we call the
    \texttt{Byte} graph. Every subcubic graph with fewer than $30$ vertices
    is 1-planar.

    Our proof is computer-assisted, but directly testing all relevant
    graphs is impractical. To establish non-1-planarity of the two graphs,
    we extend a SAT-based solver with a custom clause propagator based
    on separating cycles and a case split based on graph automorphisms,
    allowing independent cases to be solved in parallel. To show that
    all smaller subcubic graphs are 1-planar, we introduce the concept of $k$-flexibility:
    every set of at most $k$ prescribed edges can remain uncrossed in
    some 1-planar drawing. We use this property to reconstruct 1-planar drawings
    of larger graphs from drawings of smaller $k$-flexible graphs.
    This replaces exhaustive testing
    of more than forty billion cubic graphs with computations on far fewer
    graphs of smaller order.
\end{abstract}

\paragraph{Declaration of generative AI use}
ChatGPT Codex was used to assist with the implementation and with proofreading of
the manuscript.
The author supplied the ideas and a high-level
implementation plan and guided the development of the core optimizations.
The author
verified all AI-generated code, and takes full responsibility for all claims
in the paper.

\section{Introduction}

Planarity is a central concept in graph drawing, but many graphs arising in
applications are nonplanar. This motivates the study of
\emph{beyond-planar} graph classes, which allow crossings subject to certain
restrictions~\cite{KLM17,DLM19}. One of the most natural such classes is the class of
1-planar graphs, introduced by Ringel in the 1960s~\cite{R65}. A graph
is \df{1-planar} if it has a drawing in the plane in which every edge is crossed at most once.
Although 1-planar graphs retain several properties of planar graphs, including
linear edge density, recognizing them is
\NP-complete even under strong restrictions~\cite{GB07,KM13,CM13}.
Consequently, determining whether a particular sparse graph is 1-planar can be surprisingly difficult~\cite{BDM23,oops,MFPR26}.

Finding the \emph{smallest} non-1-planar graph in a family, that is,
one with the fewest vertices, is a natural test for an exact
recognition algorithm. In this paper, we consider perhaps
the most challenging elementary instance of this question: cubic graphs.
These graphs are sparse: an $n$-vertex cubic graph contains only $3n/2$ edges, which is far below the general
1-planar density bound $4n-8$~\cite{BSW84}.  Hence, the general density bound
does not establish non-1-planarity for cubic graphs.
It is not surprising that the following question has appeared in several studies of 1-planarity.

\begin{problem}[\cite{11011110_cubic,mathse_cubic,oops}]
    \label{prob:1}
    What is the smallest cubic non-1-planar graph?
\end{problem}

Eppstein constructed 1-planar drawings of several cubic graphs and suggested that
either the Coxeter graph ($28$ vertices) or the \texttt{Tutte-Coxeter} graph ($30$ vertices) might be
non-1-planar~\cite{11011110_cubic}.
The first extensive computational study
of the problem was performed using the SAT-based Optimized One-Planarity
Solver (\oops)~\cite{oops,oopsGD}.  It showed that the Coxeter graph is
1-planar and conjectured that the \texttt{Tutte-Coxeter} graph is not.  The solver also
verified that every cubic graph with at most $24$ vertices and every cubic
bipartite graph with at most $28$ vertices is 1-planar, along with other
families of cubic graphs; refer to \cref{table:cubic} for details.
These computations
covered about $150$ million graphs and required approximately $1.5$
machine-months, yet they remained far from finding the smallest non-1-planar instance.
Alternative exact solvers for 1-planarity by Binucci, Didimo, and Montecchiani~\cite{BDM23} and
by M\"unch, Fink, Pfretzschner, and Rutter~\cite{MFPR26,FMPR25} use backtracking algorithms that prune
many possible \emph{crossing patterns}, that is, sets of pairs of edges chosen to cross. These algorithms can quickly find
1-planar drawings, but proving non-1-planarity requires ruling out every
possible crossing pattern. Highly symmetric cubic instances, such as
\texttt{Tutte-Coxeter}, produce especially large families of equivalent crossing
patterns, making this search impractical. Direct SAT encodings also struggle
to prove non-1-planarity: proving unsatisfiability is considerably harder
than finding a single 1-planar drawing~\cite{oops}.

We stress that proving the existence of some cubic non-1-planar graphs is
relatively straightforward.
In fact, large random cubic graphs have superlinear crossing number~\cite{DKMW08}, and hence,
most of them are non-1-planar. An explicit
construction can be obtained from \df{cube-connected cycles}.
For $d\geq3$,
replace every vertex of the $d$-dimensional hypercube $Q_d$ by a cycle of
length $d$, whose vertices correspond to the $d$ coordinates, and join
corresponding vertices whenever the original hypercube vertices are adjacent
in that coordinate.  The resulting graph $\mathrm{CCC}_d$ is cubic and contains
$d2^d$ vertices and $3d2^{d-1}$ edges.  S{\'y}kora and
Vr{\v t}o~\cite{SV93} prove that
\[
\operatorname{cr}(\mathrm{CCC}_d)
\geq \frac{4^d}{20}-3(d+1)2^{d-2},
\]
where $\operatorname{cr}$ denotes the \df{crossing number} of a graph.
For $d=8$, this lower bound implies that
$\operatorname{cr}(\mathrm{CCC}_8)\geq1549$, whereas a 1-planar drawing of
$\mathrm{CCC}_8$, which has $3072$ edges, could contain at most
$3072/2=1536$ crossings.
Thus, the 2048-vertex graph $\mathrm{CCC}_8$ is non-1-planar, but this
construction gives little information about the smallest possible order.

In this paper, we resolve \cref{prob:1} and prove that the minimum order is exactly $30$.
For the upper bound, we give computational proofs that the \texttt{Tutte-Coxeter}
graph and another cubic graph (called the \texttt{Byte} graph) are non-1-planar;
see \cref{fig:cubic30-graphs}. The two graphs are
nonisomorphic, having girth eight and seven, respectively. For the lower
bound, we prove that every cubic graph with at most $28$ vertices is
1-planar; in fact, every subcubic graph of order less than $30$ is
1-planar.

Our proof combines two sets of new ideas. To prove the upper bound, we
strengthen \oops with two methods designed for difficult unsatisfiable
instances. First, we modify the underlying SAT solver by introducing a
custom clause propagator, following the general approach of extending SAT
search with problem-specific reasoning~\cite{FNPKSB23}. Our propagator uses
a condition based on separating cycles, inspired by M\"unch et
al.~\cite{MFPR26}, to detect partial assignments that cannot extend to
a 1-planar drawing. It adds clauses that rule out these assignments
in subsequent steps.
Second, we split the problem into cases according to which edges are
crossed and use graph automorphisms to avoid testing equivalent crossing
patterns. The cases can be solved independently on multiple cores.
This method is particularly effective for the edge-transitive
\texttt{Tutte-Coxeter} graph. Both methods are essential for completing these
computations within practical running times.

For the lower bound, directly testing all forty billion cubic graphs of order $28$
is infeasible, even on a powerful multicore machine. We instead introduce the notion of $k$-flexibility: a graph
is \df{$k$-flexible} if every set of at most $k$ prescribed edges can be kept
uncrossed in some 1-planar drawing.  Reduction rules for 3-cycles (triangles),
4-cycles, and 5-cycles allow us to construct drawings of larger graphs from
drawings of smaller graphs in which certain edges remain uncrossed.
These rules reduce the proof to exhaustive computations on smaller
graph families.

The paper is organized as follows.
\Cref{sect:oops} introduces the SAT encoding used by \oops.
\Cref{sect:ub} proves the upper bound, and \cref{sect:lb} establishes
the lower bound. We conclude with open questions.

\section{Preliminaries}
\label{sect:oops}

Throughout the paper, all graphs are finite, simple, and undirected.  The
\df{order} of a graph $G=(V, E)$ is $n=|V|$.  A graph is
\df{cubic} if every vertex has degree three, and it is \df{subcubic} if
every vertex has degree at most three.  A connected graph is
\df{biconnected} if it has no cut vertex.  The \df{girth} of a graph
is the length of its shortest cycle.

A graph is \df{1-planar} if it has a drawing in the plane in which every edge is
crossed at most once.
We write $e\mathbin\times f$ for a crossing between edges $e$ and $f$.
A \df{crossing pattern} in a graph $G$ is a set $M$ of unordered pairs of
independent edges such that no edge occurs in more than one pair.
We say that $M$ \df{avoids} an edge, or a set of edges, if none of those
edges occurs in a pair of $M$. The \df{planarization} of $G$ with respect
to $M$, denoted by $G_M$, is obtained by deleting edges $uv$ and $xy$ for each
pair $\{uv,xy\}\in M$ and adding a new vertex adjacent to $u,v,x,y$.
If $G_M$ is planar, then $G$ has a 1-planar drawing in which every edge
avoided by $M$ is uncrossed~\cite{oops}. Such a drawing can always be augmented
around each crossing between $uv$ and $xy$ by four uncrossed
\df{kite} edges $ux,xv,vy,yu$, following the circular order $u,x,v,y$
of the endpoints.

The computational results in this paper use \oops~\cite{oops}, an exact
SAT-based solver for 1-planarity.  Given a graph $G$, the solver constructs a
Boolean formula that is satisfiable if and only if $G$ is 1-planar.  A
satisfying assignment yields a drawing, while an unsatisfiable formula
proves that no such drawing exists. We next provide a high-level overview
of the SAT encoding.

\begin{figure}[!t]
    \centering
    \begin{subfigure}[t]{0.45\linewidth}
        \centering
        \includegraphics[width=\linewidth]{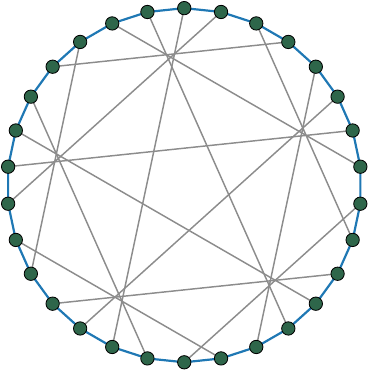}
        \caption*{\texttt{Tutte-Coxeter} graph}
    \end{subfigure}\hfill%
    \begin{subfigure}[t]{0.45\linewidth}
        \centering
        \includegraphics[width=\linewidth]{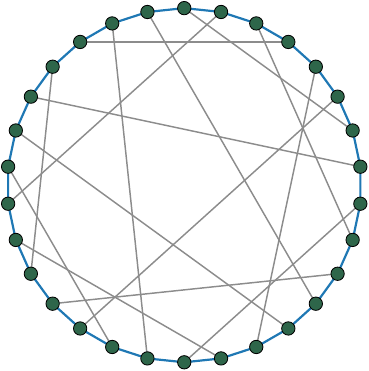}
        \caption*{\texttt{Byte} graph}
    \end{subfigure}
    \caption{The two cubic non-1-planar graphs of order $30$.}
    \label{fig:cubic30-graphs}
\end{figure}

Let $G'$ be obtained from $G$ by subdividing every edge $e$ once, and denote
the \df{division} vertex by $d_e$. A pair of edges $\{e,f\}$ in a crossing pattern
$M$ is represented by \df{merging} $d_e$ and $d_f$. Boolean
variables select these merges, and clauses ensure that every division vertex
is merged with at most one other division vertex. The resulting graph
is the planarization $G_M$ with an additional degree-two division vertex
on each edge avoided by $M$. It is planar if and only if $G_M$ is planar.
To enforce planarity, the encoding uses the fact that this graph is bipartite,
with original and division vertices forming its two parts.  Every planar
bipartite graph admits a two-page \df{stack layout}:
its vertices lie on a common line, called the spine, and its edges
are drawn without crossings in the two half-planes bounded by
the spine~\cite{FFNO11}. Moreover, an edge can be drawn above the
spine if its endpoint in the first part precedes its endpoint in
the second part, and below otherwise.
The encoding therefore introduces Boolean variables $\sigma(u,v)$
indicating whether $u$ precedes $v$ on the spine; two merged
division vertices occupy the same position.  Standard clauses enforce comparability
and transitivity of this order and link coincident positions to the selected
merges.  Finally, for every pair of independent edges of $G'$, clauses forbid
their endpoints from alternating along the spine when the edges lie on the
same page. Thus the selected merges and the stack layout ensure that
$G_M$ is planar, which yields a 1-planar drawing of $G$ as described above.

\oops uses the \texttt{MapleGlucose} SAT solver~\cite{maple}.
The source code of \oops (Optimized One-Planarity Solver) is publicly
available~\cite{oopsCode}.
We run single-core and multi-core computations on a dual-node
server with 3.2 GHz Intel Xeon Platinum 8488C processors and 360 GB RAM.
Multi-core computations use up to 48 cores in parallel.

\section{Two Cubic Graphs of Order 30 Are Non-1-Planar}
\label{sect:ub}

Our goal in this section is to prove computationally that two 30-vertex
cubic graphs are non-1-planar.
The graphs are shown in \cref{fig:cubic30-graphs}. One is
the \texttt{Tutte-Coxeter} graph.
The second one, the \texttt{Byte} graph, is named after its
index ($256$) in the enumeration of
connected cubic graphs of order $30$ and girth at least $7$
produced by \texttt{Minibaum}~\cite{Brinkmann96}.
To show non-1-planarity of the instances, we extend the basic version of
\oops described in \cref{sect:oops} with two optimizations:
dynamic propagation based on separating cycles and a new symmetry-breaking
scheme. Both optimizations are specifically designed for proving unsatisfiability.

\subsection{Separating-Cycle Propagation}
\label{sect:sepcycle-propagation}

The first optimization adds a custom clause propagator to the SAT solver.
A SAT solver searches by assigning truth values to variables, deriving
further assignments from the clauses, and revising earlier choices when
it reaches a contradiction. At an intermediate stage, only some variables
have assigned values.
In our encoding, this partial assignment imposes crossing constraints while
leaving some variables undecided. The new propagator checks whether these constraints
violate a necessary condition for 1-planarity. If they do, it adds a
\df{conflict clause} that excludes the conflicting combination of constraints. This allows the
solver to reject the current assignment and avoid the same conflict later.
Our condition is based on the separating-cycle observation of
M\"unch et al.~\cite{MFPR26}.

Consider a 1-planar drawing of a graph and a partial assignment
of crossings.
Suppose that edges $uv$ and $xy$ cross, and consider a cycle, $C$,
containing $uv$ but neither $x$ nor $y$; see \cref{fig:sepcycle-propagation}.
We call $C$ a \df{separating cycle}, since it separates $x$
and $y$.
For any $x$--$y$ path, $P$, disjoint
from vertices of the cycle, $V(C)$, and not using $xy$, the closed
curves $P\cup xy$ and $C$
cross an even number of times, even if they cross themselves. Since $xy$
crosses $C$ exactly once, $P$ must also cross $C$.
Distinct edge-disjoint paths require distinct crossed
edges of $C$, as each edge can be crossed at most once.

The propagator searches for separating cycles and many edge-disjoint paths
that must cross them. A cycle can accommodate at most as many crossings
as it has edges; the partial assignment of crossings may further
limit that number. If such paths require more
crossings than the cycle can accommodate, the configuration cannot extend
to a 1-planar drawing and we can safely add a corresponding clause to the formula.

\begin{figure}[!t]
    \centering
    \includegraphics[width=0.8\linewidth]{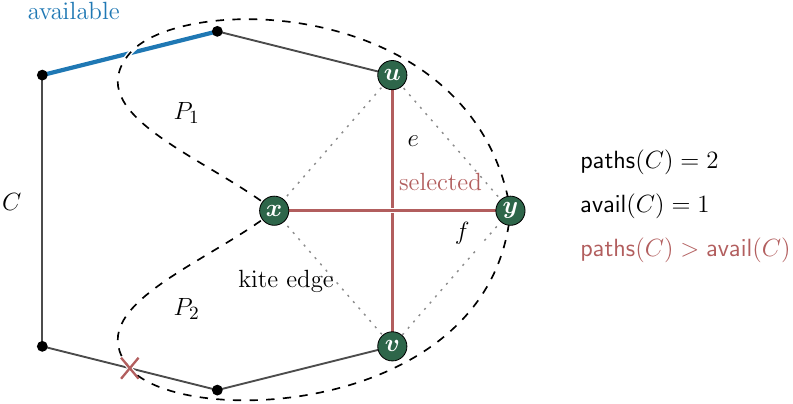}
    \caption{A separating cycle with insufficient crossing capacity. The assignment selects the
        crossing $e\mathbin\times f$, and the dotted curves are its virtual kite
        edges.  Apart from $f$, two edge-disjoint $x$--$y$ paths are forced to
        cross $C$, whereas only the blue edge of $C$ is available for an
        additional crossing. Thus $\mathsf{paths}(C)=2>1=\mathsf{avail}(C)$, and the assignment cannot be extended to a 1-planar drawing.}
    \label{fig:sepcycle-propagation}
\end{figure}

We now formalize the intuition.  A crossing is \df{selected} if the assignment
requires the corresponding pair of edges to cross.
Around each
selected crossing, we add the four corresponding uncrossed kite edges.
Whenever a new crossing is selected, the propagator searches for separating
cycles in this augmented graph, considering each
of the two crossing edges as the edge contained in the cycle. For a
selected crossing between $e=uv$ and $f=xy$, a candidate cycle $C$ contains
$e$ but avoids $x$ and $y$. An edge is \df{available} if it is neither part
of a selected crossing, nor a kite edge, nor fixed to be uncrossed.
Let $\mathsf{avail}(C)$ be the number of available edges on $C$.

For each candidate cycle $C$, we count paths that require additional
crossings with $C$. Start with the original graph $G$, without kite edges,
and delete $V(C)$ and every edge whose selected crossing partner lies on
$C$, including $f$. Let $\mathsf{paths}(C)$ be the maximum number of pairwise
edge-disjoint $x$--$y$ paths in the remaining graph, computed by a
unit-capacity flow calculation. Each path must cross $C$, and the deleted
edges exclude all previously selected crossings with $C$. Thus each
counted path requires an additional crossing.

Since every available edge of $C$ can accommodate at most one additional
crossing, extending the assignment requires $\mathsf{paths}(C)\leq \mathsf{avail}(C)$. If this
inequality is violated, as in \cref{fig:sepcycle-propagation}, the propagator returns a
conflict clause that rules out the selected crossings and uncrossed-edge
constraints responsible for the violation. Informally, at least one of
the selected crossings involved in $C$ must be abandoned, or at least one
edge of $C$ fixed to be uncrossed must instead be crossed. The solver
retains this clause after revising its choices, preventing the same
conflicting combination from occurring later. The test therefore adds
clauses as needed during the search, rather than including all such
constraints in the initial formula.

While the high-level idea of our separating-cycle propagator resembles
the filter of M\"unch et al.~\cite{MFPR26}, we introduce two changes to
make it effective within SAT search. First, we turn each detected conflict
into a clause, allowing the solver to exclude the same conflicting choices
elsewhere without repeating the geometric check. Second, their filter
chooses paths that minimize the number of available cycle edges.
However, the number of paths forced to cross a cycle also depends on
which cycle is chosen. We therefore examine multiple candidate cycles, restricting
their length to limit the cost of invoking the propagator during
the search.

\subsection{Symmetry Breaking}
\label{sect:symmetry-breaking}

The second optimization uses graph symmetries to avoid examining
equivalent crossing patterns. Recall that an \df{automorphism} of a graph
is a permutation of its vertices that preserves adjacency. Applying an
automorphism to a 1-planar drawing gives another drawing of the same
graph, with correspondingly relabeled edges. Thus crossing
patterns related by an automorphism need not be tested separately.
While \oops supports the general-purpose symmetry-breaking tools
\texttt{Satsuma} and \texttt{BreakID}, these yield almost
no performance improvement on our two instances~\cite{oops}.
This is because such tools detect
symmetries of the SAT formula, which need not preserve all symmetries
of the input graph. For example, the SAT encoding of 1-planarity
fixes one vertex as
the first on the spine~\cite{oops}, so a graph automorphism moving
this vertex will not preserve the formula.
To exploit graph
symmetries directly, we instead use automorphisms of the input graph
to split the problem into cases before invoking the SAT solver.

We illustrate the idea on a cubic graph with 30 vertices containing 45 edges.
In any 1-planar drawing, the number of crossed edges is even.
Thus at least one edge is uncrossed.
The \texttt{Tutte-Coxeter} graph is edge-transitive: for any two edges,
an automorphism maps one to the other. We can therefore choose
any one edge and require it to be uncrossed, since every 1-planar
drawing can be relabeled to satisfy this requirement. We apply
the same idea repeatedly, using only automorphisms that preserve
the constraints already imposed.

Formally, each case is described by a pair $(N,P)$ of disjoint edge
sets: the edges in $N$ are fixed to be uncrossed and those in $P$ are
fixed to be crossed. Initially, $N=P=\emptyset$.
Let $\mathrm{Aut}(G)$ denote the automorphism group
of $G$. The \df{stabilizer} $H=\mathrm{Stab}(N)$ is the subgroup of
$\mathrm{Aut}(G)$ consisting of automorphisms that map each edge of $N$
to itself. The \df{orbit} of an edge under $H$ is the set of edges to
which automorphisms in $H$ map it. We maintain the invariant that $P$
is a union of $H$-orbits, so every automorphism in $H$ preserves
the constraints already imposed.
Let $\mathcal O_0,\ldots,\mathcal O_{k-1}$ be the $H$-orbits of
the remaining edges, those outside $N\cup P$, in a fixed order.
Choose a representative $r_i$ from each $\mathcal O_i$.
We divide the current case into $k+1$ cases:
\begin{itemize}
    \item every remaining edge is crossed; or
    \item for each $i\in\{0,\ldots,k-1\}$, all edges in
    $\mathcal O_0\cup\cdots\cup\mathcal O_{i-1}$ are crossed and $r_i$ is
    uncrossed.
\end{itemize}

These cases cover all drawings up to an automorphism.  Consider a
drawing satisfying the constraints at $(N,P)$.  If every remaining edge is
crossed, it belongs to the first case.  Otherwise, let $\mathcal O_i$ be the
first orbit containing an uncrossed edge $e$.  An automorphism in $H$ maps
$e$ to $r_i$. It fixes every edge of $N$ and preserves $P$ and all preceding
orbits, since these are unions of $H$-orbits.
The relabeled drawing therefore
belongs to case $i$. We add $r_i$ to $N$ and all preceding orbits to $P$,
then repeat the split using the new stabilizer $\mathrm{Stab}(N)\subseteq H$.
The updated $P$ remains a union of orbits under this subgroup.

\begin{table}[!t]
    \small
    \centering
    \caption{Wall-clock running times for basic \oops, separating-cycle
        propagation (\cref{sect:sepcycle-propagation}), symmetry-based case
        splitting (\cref{sect:symmetry-breaking}), and their combination.
        All configurations use $48$ cores; $>7$ days indicates that the
        computation did not finish within one week on the multi-core machine.}
    \label{tab:cubic30-runtime}
    \setlength{\tabcolsep}{4pt}
    \begin{tabular*}{\textwidth}{@{\extracolsep{\fill}}lrrrrrrr@{}}
        \toprule
        graph & $|V|$ & $|E|$ & $|\mathrm{Aut}(G)|$
        & basic \oops & \cref{sect:sepcycle-propagation}
        & \cref{sect:symmetry-breaking} & both \\
        \midrule
        \texttt{Robertson} & $19$ & $38$ & $24$
        & $176$ s & $24$ s & $86$ s & $28$ s \\
        \texttt{Folkman} & $20$ & $40$ & $3840$
        & $161$ s & $66$ s & $48$ s & $31$ s \\
        \texttt{Brinkmann} & $21$ & $42$ & $14$
        & $299$ s & $31$ s & $188$ s & $37$ s \\
        \texttt{Holt} & $27$ & $54$ & $54$
        & $1661$ s & $98$ s & $1040$ s & $60$ s \\
        \midrule
        \texttt{Tutte-Coxeter} & $30$ & $45$ & $1440$
        & $>7$ days
        & $>7$ days
        & $>7$ days & $2.6$ h \\
        \texttt{Byte} & $30$ & $45$ & $16$
        & $>7$ days
        & $>7$ days
        & $>7$ days & $6.5$ h \\
        \bottomrule
    \end{tabular*}
\end{table}

Initially, the existence of an uncrossed edge leaves only one case
for each edge orbit: one for \texttt{Tutte-Coxeter} and six for
\texttt{Byte}. Each resulting case is passed to the SAT solver
with the specified edges required to be crossed or uncrossed.
We split a case further only if the solver reaches the time limit.
One case can thus represent many equivalent crossing patterns.
The cases are independent and can be processed in parallel on
the multi-core server described in \cref{sect:oops}. The graph is
non-1-planar once every resulting case has been proved unsatisfiable.

\begin{theorem}
    \label{thm:cubic30}
    There exist two cubic non-1-planar graphs of order $30$.  In particular,
    the \texttt{Tutte-Coxeter} graph and the \texttt{Byte} graph are non-1-planar.
\end{theorem}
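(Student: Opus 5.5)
The proof is computational: we show that for each of the two graphs every case produced by the symmetry-based splitting of \cref{sect:symmetry-breaking} has an unsatisfiable \oops formula. The SAT search is strengthened by the separating-cycle propagator of \cref{sect:sepcycle-propagation}.

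Soundness rests on four facts, each of which we check first.

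1. \emph{The encoding is correct.} By \cref{sect:oops}, the formula is satisfiable if and only if some crossing pattern $M$ has a planar planarization $G_M$, which happens exactly when $G$ is 1-planar.

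2. \emph{The conflict clauses are implied constraints.} For a selected crossing $uv\mathbin\times xy$ and a cycle $C$ through $uv$ avoiding $x,y$, each of the $\mathsf{paths}(C)$ edge-disjoint $x$--$y$ paths must cross $C$ an odd number of times. This follows from the parity of crossings between the closed curves $P\cup xy$ and $C$. The deleted edges ensure that each such crossing is a new one, on an available edge, and each available edge takes at most one crossing. Hence $\mathsf{paths}(C)>\mathsf{avail}(C)$ certifies that no 1-planar drawing satisfies the current partial assignment. The added clause therefore removes only non-drawings.

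3. \emph{Kite edges may be assumed.} Every 1-planar drawing can be augmented by uncrossed kite edges, so using them to close cycles does not lose drawings.

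4. \emph{The case split is exhaustive.} The number of crossed edges is even and $|E|=45$ is odd, so some edge is uncrossed. The coverage argument in \cref{sect:symmetry-breaking} then shows that, up to an automorphism in the current stabilizer, every drawing meeting $(N,P)$ falls into one of the $k+1$ subcases.

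The concrete execution proceeds as follows. For \texttt{Tutte-Coxeter}, which is edge-transitive, we start from the single case $N=\{e\}$, $P=\emptyset$. For \texttt{Byte}, we take one case per edge orbit, six in total. We run the augmented solver on each case with a time limit. Whenever the limit is hit, we split again using the stabilizer of the enlarged $N$ and recurse. The theorem holds once every leaf case returns UNSAT. To make this trustworthy, we would log the case tree, recheck that the leaves cover all cases, and ideally emit DRAT proofs. Clauses added by the propagator would be justified separately as geometrically valid lemmas, since they are not derivable resolution steps.

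The main obstacle is making the computation finish at all. \Cref{tab:cubic30-runtime} shows that neither optimization alone finishes within a week. The expected difficulty is tuning two things. The first is the maximal cycle length explored by the propagator: longer cycles find more conflicts but slow each propagation. The second is the time limit that triggers further splitting. I would first try a small cycle bound (around girth plus a few) together with aggressive splitting for \texttt{Byte}, whose automorphism group is small and so gives the least pruning from symmetry. A secondary concern is the correctness of the conflict clauses, namely that they name exactly the selected crossings and uncrossed-edge constraints that were used. That argument is short but must be checked carefully.
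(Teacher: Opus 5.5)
Your plan matches the paper's proof: the symmetry-based case split (one initial case for \texttt{Tutte-Coxeter}, six for \texttt{Byte}), \oops with the separating-cycle propagator on each case, recursive splitting on timeouts (the paper uses a 300-second limit), and UNSAT at every leaf. Your checks for the propagator clauses, the kite edges, and the coverage of the split are sound. The gap is in your Fact~1, which is not the statement the leaves need.

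A leaf needs the following: the formula, together with the unit constraints for $(N,P)$, is satisfiable if and only if $G$ has a 1-planar drawing in which the edges of $N$ are uncrossed and those of $P$ are crossed. The stock \oops encoding does not guarantee this. It contains symmetry-breaking rules justified by graph symmetries, such as fixing the relative spine order of twin vertices. Such rules only ensure that \emph{some} drawing in each symmetry class survives. Your case split also chooses representatives modulo automorphisms, and the two choices need not agree. For example, a clause fixing the order of twins $u,u'$ may exclude exactly the drawings that satisfy the case constraints. The swap $u\leftrightarrow u'$ that would restore that clause need not lie in $\mathrm{Stab}(N)$ or preserve $P$. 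A leaf could then be UNSAT even though drawings exist in that case, and the theorem would not follow.

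The paper closes this gap by disabling such rules whenever crossing constraints are imposed. It also drops the transitivity clauses for $\sigma$. Removing clauses only weakens the formula, so UNSAT of the weakened formula still certifies UNSAT of the complete constrained encoding. You need this step, or else an argument that every remaining clause of the base encoding holds for all drawings, not merely for one drawing up to automorphism.
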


\begin{proof}[Proof (computational)]
    For each graph, we applied the case split described
    above and tested every resulting case with \oops, using the
    separating-cycle propagator from \cref{sect:sepcycle-propagation}. The computations ran up to $48$
    independent solver processes in parallel, one per core, with a time limit
    of $300$ seconds for an individual SAT call; timed-out cases were split
    recursively. The combined running times in \cref{tab:cubic30-runtime}
    include the automorphism computation and all SAT calls.
    Every final case was proved unsatisfiable.

    When crossing constraints are imposed, we disable some rules in the basic
    encoding of \oops, such as fixing a particular relative order between
    twin vertices; these rules need not preserve
    the imposed crossing constraints. We also omit transitivity clauses
    for relative variables, $\sigma$. This only weakens the
    SAT formula, so proving the weakened formula unsatisfiable also
    proves the complete encoding unsatisfiable.
\end{proof}

We also tested separating-cycle propagation (\cref{sect:sepcycle-propagation})
and symmetry-based case splitting (\cref{sect:symmetry-breaking})
on the graphs in \cref{tab:cubic30-runtime} to see their individual impact.
Each optimization reduces running times on
the four additional graphs, but neither resolves \texttt{Tutte-Coxeter}
or \texttt{Byte} even after a week of computation. Combining the two optimizations
resolves both hard instances within hours.

\section{All Subcubic Graphs of Order Less Than 30 Are 1-Planar}
\label{sect:lb}

Our goal in this section is to show that every graph of maximum degree three
and fewer than $30$ vertices is 1-planar. We first observe that the
smallest subcubic non-1-planar graph
is cubic. This is formally proved in \cref{lem:flexibility-core}; the lemma
also proves that this graph is biconnected.
Since cubic graphs have an even order, it suffices to show that all cubic
graphs of order at most $28$ are 1-planar, which is the main result of
the section.

\begin{theorem}
    \label{thm:cubic28}
    Every cubic graph with at most 28 vertices is 1-planar.
\end{theorem}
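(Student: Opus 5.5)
We argue by minimal counterexample, combined with the $k$-flexibility reductions sketched in the introduction. Let $G$ be a non-1-planar cubic graph with at most $28$ vertices of minimum order. By \cref{lem:flexibility-core}, we may assume that $G$ is biconnected and that every proper subcubic subgraph of smaller order is 1-planar; more strongly, smaller graphs can be taken to be $k$-flexible for a suitable small $k$. The plan is to show that $G$ has large girth. We do this by exhibiting, for each short cycle, a reduction rule that produces a smaller subcubic graph $G'$ and a set $S$ of at most $k$ edges of $G'$ to keep uncrossed. Given a 1-planar drawing of $G'$ avoiding $S$, we reinsert the removed structure locally, inside faces incident to the uncrossed edges, obtaining a 1-planar drawing of $G$.

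\textbf{Triangles.} If $G$ contains a triangle $abc$, contract it to a single vertex $t$ of degree three. This gives a cubic graph of order $n-2$. Keep the three edges at $t$ uncrossed, so $k=3$. In a drawing of the smaller graph, a small neighbourhood of $t$ is crossing-free, and we replace $t$ by a tiny triangle.

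\textbf{4-cycles and 5-cycles.} For a 4-cycle $abcd$, delete it (or contract it suitably) and reconnect the four external neighbours. For example, we can join them by two new edges, choosing the pairing so that simplicity is preserved. We then require these new edges, and possibly a few incident edges, to be uncrossed, so that each new edge can be "opened up" into a path through the removed vertices drawn in a thin strip along the edge. A 5-cycle is handled similarly: remove it and add edges among its five outside neighbours, leaving one degree-deficient vertex (allowed since we work with subcubic graphs). We require the at most $k$ new edges to be uncrossed, and reinsert the 5-cycle together with its chords inside the regions adjacent to these edges. Each rule needs a careful local argument. Routing the external edges that were uncrossed in the strip creates no new crossings, so the reinsertion preserves 1-planarity.

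\textbf{Computation.} The rules leave two cases: $G$ has girth at least $6$, or the reduced graphs have order at most $26$ or $24$. The computational part is twofold. First, verify with \oops that every subcubic graph of order at most $26$ is $k$-flexible. It should suffice to check cubic graphs and, via \cref{lem:flexibility-core}, biconnected ones, each with all $k$-subsets of edges up to automorphism prescribed uncrossed. Second, test directly all cubic graphs of order at most $28$ and girth at least $6$, which \texttt{Minibaum} enumerates in manageable numbers, for 1-planarity. The main obstacle is the first step. The number of graphs of order $26$ times the number of edge $k$-subsets is huge, and proving flexibility is more costly than finding a single drawing. I would try first to keep $k$ as small as the reductions allow, say $k\le 3$. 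I would also establish flexibility inductively: if $G'$ itself has a short cycle, derive its $k$-flexibility from $(k+c)$-flexibility of a smaller graph via the same rules. This leaves exhaustive SAT checks only for graphs of girth at least $5$ or $6$, where the counts are far smaller.
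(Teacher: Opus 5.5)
Your architecture matches the paper's: a minimal counterexample, trading flexibility for a drop in order via cycle reductions, and exhaustive tests only on high-girth families. However, the reduction rules you actually specify do not work, and they are where the proof lives.

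\textbf{The 4-cycle rule is not local.} Suppose you delete a 4-cycle $(v_0,v_1,v_2,v_3)$ and join the outside neighbours by two new edges, say $x_0x_1$ and $x_2x_3$. You can reopen these into paths $(x_0,v_0,v_1,x_1)$ and $(x_2,v_2,v_3,x_3)$. But $v_1v_2$ and $v_3v_0$ must then join two strips that can lie far apart in the drawing. Keeping the new edges uncrossed does not put them on a common face, for any pairing. So these cycle edges may have to cross many edges. The same non-locality breaks your 5-cycle rule, where $v_2$ must reach $v_1$, $v_3$ and $x_2$.

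What works is a reduction whose removed part is restored inside one crossing-free disk. Replace $C$ by a single edge $ab$, with $a$ joined to $x_0,x_1$ and $b$ to $x_2,x_3$, so the order drops by exactly two. If $ab$ is uncrossed, the two possible boundary orders are realized either with no crossing or with the single crossing $c_1\mathbin\times c_3$, so $c_0$ and $c_2$ stay uncrossed. If $ax_1$ is also uncrossed, a crossing $c_3\mathbin\times s_1$ keeps $c_0,c_1,c_2$ uncrossed.

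\textbf{Prescribed edges on the cycle.} You never say what happens to edges of the prescribed set $F$ that lie on the removed cycle, yet this is the whole content of an inductive flexibility step. The paper labels $C$ so that $F\cap E(C)\subseteq\{c_0,c_2\}$. It uses the second variant when $F$ is two adjacent cycle edges. This is why each step costs at most one extra prescribed edge and gives $\mathcal P_{k+1}(n-2)\Rightarrow\mathcal P_k(n)$ for $k\le 2$.

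\textbf{Triangles.} Your triangle rule charging $k=3$ is unnecessary and would break this budget. A vertex has a crossing-free neighbourhood, so a triangle can be restored there with no extra prescribed edges. Indeed \cref{lem:flexibility-core}, which you cite only for biconnectivity, already gives triangle-freeness.

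\textbf{Girth five.} Girth five cannot be removed by a uniform rule within this budget, so your claim that the rules leave only girth at least $6$ is unsupported. The natural locally reversible 5-cycle reductions are:
\begin{itemize}
\item the path reduction to $(a,b,c)$, whose restoration needs all three edges at $b$ uncrossed. When proving the order-$28$ case, that means $3$-flexibility at order $26$, but the chain provides only $\mathcal P_1(26)$;
\item contracting $C$ to a vertex, which leaves a degree-five vertex outside the scope of $\mathcal P_k$.
\end{itemize}

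The paper therefore treats the girth-$\ge 5$ graphs at each level as separate computational facts, namely hypothesis (b) of \cref{lem:flexibility-reduction}. These are $2$-flexibility at $n=24$, $1$-flexibility at $n=26$, and $1$-planarity at $n=28$. In the last case the $652$ million girth-$5$ graphs are handled by using 5-cycle reductions only to group instances that share a reduced graph. Direct fallback tests are run whenever the shared test fails. Two further facts complete the set: $3$-flexibility for $n\le 22$ and girth $\ge 4$, and $1$-planarity for $n=28$ and girth $\ge 6$.

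Until you identify exactly these families and flexibility levels, your computational step is not a finite, checkable claim, and the reduction chain does not close.
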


Proving the claim computationally, that is, exhaustively testing 1-planarity of
all graphs is impractical:
there are $2{,}094{,}480{,}864$ connected cubic graphs with $n=26$ and
$40{,}497{,}138{,}011$ with $n=28$.  Even at ten milliseconds per graph
(which one can achieve with \oops for the \emph{simplest} instances),
the latter family alone would require more than twelve CPU-years.
Harder and larger instances require substantially more time.
\Cref{table:cubic} lists some families of cubic graphs whose 1-planarity was
verified in \cite{oops}, along with the runtimes on a multi-core machine.
For example, testing all cubic graphs with at most $24$ vertices
and all cubic bipartite graphs with at most $28$ vertices took
approximately $1.5$ machine-months in total, averaging less than
a second per instance.
Therefore, directly verifying the claim of \cref{thm:cubic28} is practically unrealistic.

\begin{table}[!t]
    \small
    \centering
    \caption{Computational results for 1-planarity of cubic graphs
        taken from \cite{oops}. The reported runtimes were measured
        in multi-core computations.}
    \label{table:cubic}
    \footnotesize
    \setlength{\tabcolsep}{3pt}
    \begin{tabular*}{\textwidth}{@{\extracolsep{\fill}}rrrrC{60pt}}
        \toprule
        graph class & total count & total runtime & runtime per instance & all $1$-planar? \\
        \midrule
        girth$\geq4$ $n=24$
        & $23,\!780,\!814$ & 10h 20m & $75$ ms & \yes \\
        $n=24$                    & $117,\!940,\!535$ & 6d 10h & $150$ ms & \yes \\
        \cmidrule{1-1}
        bipartite $n=26$             & $245,\!627$ & 25m & $200$ ms & \yes \\
        girth$\ge 5$ $n=26$          & $31,\!478,\!584$ & 3d 6h & $290$ ms & \yes \\
        $n=26$                    & $2,\!094,\!480,\!864$ & & & ? \\
        \cmidrule{1-1}
        bipartite $n=28$          & $2,\!291,\!589$ & 6h 55m & $340$ ms & \yes \\
        girth$\ge 6$ $n=28$          & $4,\!624,\!501$ & 2d 23h & $1700$ ms & \yes \\
        $n=28$                    & $40,\!497,\!138,\!011$ & & & ? \\
        \cmidrule{1-1}
        bipartite $n=30$          & $23,\!466,\!857$ & 3d 9h & $600$ ms & all but one \\
        girth$\ge 7$ $n=30$       & $546$ & 26m & $102$ s & all but two \\
        \bottomrule
    \end{tabular*}
\end{table}

We instead suggest a hybrid approach in which we provide several reduction
rules that reduce the entire search space to a small set of tractable \emph{facts}
about cubic graphs of small orders.  Each such fact can be verified computationally
on a desktop machine within just a few days (\crefrange{clm:flexible22}{clm:girth6-28}, \cref{tab:cubic28-runtime}),
while the reduction rules
(\crefrange{lem:flexibility-core}{lem:flexibility-reduction}) are proved
combinatorially.  This combination reduces the required
computational runtime by a few orders of magnitude.
The central concept of the proof is $k$-flexibility. Recall that, for an integer
$k\geq 0$, a graph $G$ is $k$-flexible if, for every set
$F\subseteq E(G)$ with $|F|\leq k$, it has a 1-planar drawing in which every
edge of $F$ is uncrossed.  In particular, $0$-flexibility is equivalent to
1-planarity.  Before formally describing the reduction rules and establishing
computational facts, let us provide a high-level overview of the proof of
\cref{thm:cubic28}.

For integers $k,n\geq 0$, define the following statement:
\[
\mathcal P_k(n) :=
\qquad
\text{every cubic graph of order at most $n$ is $k$-flexible}.
\]
The claim of \cref{thm:cubic28} is $\mathcal P_0(28)$.
By \cref{lem:flexibility-core} (stated in the next subsection), a smallest counterexample to the claim
would be biconnected, cubic, and triangle-free.
Let $G$ be such a graph of order at most $28$, and consider its girth.
If its girth is five (resp., at least six), then $G$ is 1-planar by
\cref{clm:girth5-28} (resp., \cref{clm:girth6-28}). Hence, we may
assume that $G$ contains a 4-cycle
$C=(v_0,v_1,v_2,v_3)$. Delete $C$ and replace it by adjacent vertices
$a,b$, where $a$ is joined to the external neighbors of $v_0,v_1$ and
$b$ to those of $v_2,v_3$.
The resulting graph $H$ has order at most $26$.  Under $\mathcal P_1(26)$, it has a
1-planar drawing in which $ab$ is uncrossed, which expands to a 1-planar
drawing of $G$.  Thus, proving $\mathcal P_0(28)$ reduces to proving
$\mathcal P_1(26)$.  The same argument, using
\cref{clm:flexible26,clm:flexible24} for cases with girth at least five,
reduces this to $\mathcal P_2(24)$ and $\mathcal P_3(22)$, respectively.
Finally, \cref{clm:flexible22} establishes
$\mathcal P_3(22)$.
Thus, the reductions replace 1-planarity tests on more than 40 billion
graphs with $k$-flexibility tests, for $0\leq k\leq3$, on about
18 million instances of smaller order.

Next in \cref{sect:building} we
provide the utilized reduction rules which serve as the basis of computational
facts discussed in \cref{sect:facts}.

\begin{table}[!t]
    \small
    \centering
    \caption{Computational tests used in the proof of \cref{thm:cubic28}. Running times are rounded
        wall-clock measurements on 48 cores. The total instance count excludes
        classification, while the total running time includes it.}
    \label{tab:cubic28-runtime}
    \setlength{\tabcolsep}{2pt}
    \begin{tabular*}{\textwidth}{@{\extracolsep{\fill}}lllrr@{}}
        \toprule
        result & graph class & test & instances & running time \\
        \midrule
        \cref{clm:flexible22} & girth $\geq 4$, $n\leq 22$ & 3-flexibility
        & $1{,}538{,}493$ & 18 h \\
        \cref{clm:flexible24} & girth $\geq 5$, $n=24$ & 2-flexibility
        & $1{,}620{,}470$ & 32 h \\
        \cref{clm:flexible26} & girth $\geq 5$, $n=26$ & 1-flexibility, cores
        & $4{,}881{,}876$ & 34 h \\
        & & 1-flexibility, fallback
        & $9{,}287$ & $<1$ h \\
        \cref{clm:girth5-28} & girth $=5$, $n=28$ & classification
        &  & 16 h \\
        & & 1-planarity, reduced instances
        & $3{,}788{,}793$ & 18 h \\
        & & 1-planarity, fallback
        & $1{,}522{,}464$ & 35 h \\
        \cref{clm:girth6-28} & girth $\geq 6$, $n=28$ & 1-planarity
        & $4{,}624{,}501$ & 69 h \\
        \midrule
        \multicolumn{3}{l}{Total} & $17{,}985{,}884$ & 9.3 days \\
        \bottomrule
    \end{tabular*}
\end{table}

\subsection{Building Blocks}
\label{sect:building}

\begin{lemma}
    \label{lem:flexibility-core}
    For every integer $k\geq 0$, a smallest subcubic graph that is not
    $k$-flexible is biconnected, cubic, and triangle-free.
\end{lemma}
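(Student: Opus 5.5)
Let $G$ be a subcubic graph that is not $k$-flexible and has the minimum number of vertices, and among those the minimum number of edges. Fix a set $F$ with $|F|\le k$ such that no 1-planar drawing of $G$ leaves every edge of $F$ uncrossed. The plan is to show that each of the three properties can fail only if some smaller subcubic graph is also not $k$-flexible. The smaller drawings will be combined using only operations that add no new crossings, so edges of $F$ that were uncrossed stay uncrossed.

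\emph{Connectivity and biconnectivity.} Suppose first that $G$ is disconnected with components $G_1,G_2$. Each $G_i$ is a smaller subcubic graph, so it has a 1-planar drawing in which the edges of $F\cap E(G_i)$ are uncrossed. Placing these drawings side by side gives the required drawing of $G$, a contradiction. Now suppose $G$ is connected with a cut vertex $v$. Since $\deg v\le 3$, one side of $v$ meets it in a single edge $e=vw$, and this edge is a bridge. Deleting $e$ gives two components $A\ni v$ and $B\ni w$, each smaller and subcubic. Draw $A$ and $B$ with $F$ uncrossed. In a 1-planar drawing every vertex lies on the boundary of some face of the planarization, so we can place $v$ on the outer face of $A$ and $w$ on the outer face of $B$. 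This is done by re-embedding the planarization so that a face incident to $v$ (respectively $w$) becomes the outer face. Putting the two drawings next to each other, we join $v$ to $w$ through the outer face with an uncrossed edge $e$. So $G$ is biconnected.

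\emph{Cubic.} Suppose some vertex $v$ has degree at most $2$. A biconnected graph other than $K_2$ has minimum degree at least $2$, and the tiny cases are trivially flexible, so $\deg v=2$. Let $u,w$ be its neighbors. If $uw\notin E(G)$, we suppress $v$: let $G'=G-v+uw$, which is subcubic with fewer vertices. Put $uw$ in $F'$ if $uv$ or $vw$ is in $F$, and otherwise keep $F'=F\cap E(G')$; then $|F'|\le|F|\le k$. In a drawing of $G'$ with $F'$ uncrossed, we subdivide $uw$ at a point that is not a crossing. If $uw$ is crossed, the crossing falls on only one of the two halves, and we choose which half to label $uv$ so that the $F$-edge stays uncrossed. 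This needs care when both $uv$ and $vw$ are in $F$. In that case $|F'|\le|F|-1$, and $uw\in F'$ is uncrossed anyway, so both halves are uncrossed. If instead $uw\in E(G)$, then $u,v,w$ form a triangle with a degree-2 vertex. We delete $v$, draw $G-v$ with $F\setminus\{uv,vw\}$ uncrossed, and additionally require $uw$ uncrossed; this requirement is covered because removing $uv$ frees budget, so $|F'|\le k$. We then place $v$ next to the uncrossed edge $uw$ and draw $uv$ and $vw$ uncrossed alongside it.

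\emph{Triangle-free.} Let $T=xyz$ be a triangle in the cubic biconnected graph $G$, with outside neighbors $x',y',z'$. If two of these coincide, the case is handled by a direct local argument or by biconnectivity. Otherwise, contract $T$ to a single vertex $t$ adjacent to $x',y',z'$. The result $G'$ is subcubic and smaller. Map $F$ to $F'$ by sending each pendant edge $xx'$ to $tx'$ and dropping the triangle edges, so $|F'|\le k$. In a drawing of $G'$ with $F'$ uncrossed, we replace $t$ by a tiny triangle inside a small disk around $t$. The edges $tx',ty',tz'$ leave the disk in some rotation order, and we attach $x,y,z$ in that order. The triangle edges lie in the disk and are uncrossed, and the pendant edges inherit their crossing status, so $F$ is uncrossed.

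The main obstacle I expect is the degree-2 case. There the budget accounting for $F$ and the triangle subcase must be checked carefully so that the new uncrossed-edge requirement never exceeds $k$. In the subcase where $uv,vw\notin F$ but we still need $uw$ uncrossed, we can instead route $v$ along the edge $uw$ itself, or alongside its kite edges, without needing it uncrossed. The other possible snag is the bridge case, where we must justify moving an arbitrary vertex to the outer face in a 1-planar drawing.
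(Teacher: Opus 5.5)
There is a genuine gap in the subcase you flagged yourself: $v$ has degree two, $uw\in E(G)$, and none of $uv,vw,uw$ is in $F$, with $|F|=k$ (for $k=0$ this is the only possibility). Then $F\cup\{uw\}$ has $k+1$ edges, and minimality lets you prescribe only $k$. So you must insert $v$ into an \emph{arbitrary} drawing of $G-v$ with $F$ uncrossed, and there $uw$ may be crossed by some edge $xy$.

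Your remedy does not work. Drawing $uv$ or $vw$ alongside $uw$ makes it cross $xy$, which is already crossed. Chaining kite curves $u$--$x$--$w$ forces you to pass around $x$, crossing its other edges (two of them if $\deg x=3$).

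In fact there is a real obstruction. Suppose some cycle $C'$ through $xy$ avoids $u$ and $w$. Since $uw$ crosses $C'$ exactly once, $u$ and $w$ lie on opposite sides of $C'$ in the parity sense. Hence the path $u,v,w$ and the edge $uw$, whether kept or redrawn, must each cross $C'$. That needs two crossings on $C'$, while $xy$ can absorb at most one. If all other edges of $C'$ are already crossed in the drawing you were handed, $v$ cannot be added anywhere without changing the rest of the drawing. This is the same parity argument behind the paper's separating-cycle propagator.

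The missing idea is to contract rather than delete. If $u$ and $w$ have a second common neighbour, then $G$ is the diamond or that neighbour is a cut vertex. Otherwise, contract the triangle $uvw$ to a single vertex $z$:
\begin{itemize}
\item $z$ has degree at most two, and its neighbours (the outside neighbours of $u$ and $w$) are distinct, so the result is a smaller simple subcubic graph.
\item Dropping triangle edges from $F$ and renaming the attachments gives $|F'|\le|F|$ with no additional prescribed edge.
\item The triangle is then redrawn inside a small crossing-free disk around $z$.
\end{itemize}
This is what the paper does.

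A smaller omission: in the triangle-free step you dismiss two coinciding outside neighbours as a ``direct local argument''. There the four vertices induce a diamond. Biconnectivity excludes that the third neighbour of the coincident vertex equals the remaining outside neighbour. One then contracts the whole diamond to a degree-two vertex, as the paper does.

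The rest of your argument is correct: the disconnected case, the bridge case (re-embedding so that $v$ and $w$ lie on outer faces), suppression of $v$ when $uw\notin E(G)$, and contraction of a triangle with distinct outside neighbours.
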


\begin{proof}
    Let $G$ be a smallest subcubic graph that is not $k$-flexible.  Fix a set
    $F\subseteq E(G)$ with $|F|\leq k$ for which no 1-planar drawing of $G$
    keeps every edge of $F$ uncrossed.

    We first show that $G$ is biconnected.  Suppose otherwise.  If $G$ is
    disconnected, then each of its connected components has smaller order and is
    therefore $k$-flexible.  Draw each component with its edges of $F$
    uncrossed and place the drawings in disjoint regions.  This gives a
    1-planar drawing of $G$ in which every edge of $F$ is uncrossed, a
    contradiction.  Hence $G$ is connected and has a cut vertex.  Every
    block of $G$ has smaller order and is therefore $k$-flexible.  Draw each
    block with its edges of $F$ uncrossed and place the drawings in disjoint
    wedges around each cut vertex. Again, every edge of $F$ is
    uncrossed, a contradiction.  Thus, $G$ is biconnected.

    We next show that $G$ is cubic.  Suppose otherwise.  Since $G$ is
    biconnected, it has a vertex $v$ of degree two.  Let $u$ and $w$ be its
    neighbors, and consider three cases.
    \begin{enumerate}[(a)]
        \item Suppose $uw\notin E(G)$.  Remove $v$ and insert edge $uw$, obtaining
        a smaller subcubic graph $H$.  Let $F_H$ contain every edge of $F$
        outside $\{uv,vw\}$, and include $uw$ if $F$ contains $uv$ or $vw$.
        Then $|F_H|\leq |F|$, so $H$ has a 1-planar drawing in which every
        edge of $F_H$ is uncrossed.  Subdividing $uw$ into the path
        $(u,v,w)$ gives such a drawing of $G$, a contradiction.

        \item Suppose $uw\in E(G)$ and $u,w$ have another common neighbor
        $x$.  If $V(G)=\{u,v,w,x\}$, then $G$ is the planar diamond graph and
        hence is $k$-flexible, a contradiction.  Otherwise $x$ is a cut
        vertex, contradicting biconnectivity.

        \item Suppose $uw\in E(G)$ and $u,w$ have no other common neighbor.
        Contract the triangle $\langle u,v,w\rangle$ to a single vertex, obtaining a
        smaller simple subcubic graph $H$. Let $F_H$ consist of the images
        of edges of $F$ not in the triangle. Since $|F_H|\leq |F|\leq k$,
        minimality gives a 1-planar drawing of $H$ with every edge of
        $F_H$ uncrossed. Expanding the contracted vertex into a triangle
        without adding crossings gives a drawing of $G$ with every edge
        of $F$ uncrossed, a contradiction.
    \end{enumerate}
    Thus, $G$ is cubic.

    Finally, suppose that $G$ contains a triangle $T=\langle u,v,w\rangle$.  Let
    $u',v',w'$ be the respective neighbors of its vertices outside $T$, and
    consider three cases.
    \begin{enumerate}[(a)]
        \item Suppose $u',v',w'$ are pairwise distinct.  Contract $T$ to a
        single vertex, obtaining a smaller simple subcubic graph $H$.
        Let $F_H$ consist of the images of edges of $F$ not in $T$.
        Then $|F_H|\leq |F|$. Expanding the contracted vertex in
        a drawing of $H$ in which every edge of $F_H$ is uncrossed gives a
        drawing of $G$ in which every edge of $F$ is uncrossed, a
        contradiction.

        \item Suppose $u'=v'=w'$.  Then $G=K_4$ is planar and hence
        $k$-flexible, a contradiction.

        \item Otherwise, after relabeling, $u'=v'=x$ and $w'=y\ne x$.  The
        vertices $u,v,w,x$ induce a diamond.  Let $p$ be the third neighbor
        of $x$.  If $p=y$, then $y$ is a cut vertex, contradicting
        biconnectivity.  Hence $p\ne y$.

        Contract the diamond to a single vertex $z$, obtaining a smaller
        simple subcubic graph $H$. Let $F_H$ consist of the images of edges
        of $F$ outside the diamond, including the attachments $xp$ and $wy$
        when they belong to $F$. The resulting
        set $F_H$ has size at most $|F|$.  In a drawing of $H$ in which every
        edge of $F_H$ is uncrossed, expand $z$ inside a small disk into the
        diamond.  This gives a drawing of $G$ in which every edge of $F$ is
        uncrossed, a contradiction.
    \end{enumerate}
    Hence, $G$ is triangle-free.
\end{proof}

By \cref{lem:flexibility-core}, it suffices for the proof of \cref{thm:cubic28} to consider triangle-free cubic
graphs.
The next three lemmas describe reduction rules for instances of girth four
and five:
they replace a 4-cycle or a 5-cycle by a
smaller graph and specify which edges must remain uncrossed so that a drawing
of the reduced graph can be expanded to the original graph.
We define the following \df{path reduction}.
Let \(C=(v_0,\ldots,v_{\ell-1})\) be a chordless
cycle of length \(\ell\in\{4,5\}\) in a cubic graph, let \(x_i\) be the
neighbor of \(v_i\) outside \(C\), and write \(c_i=v_iv_{i+1}\) and
\(s_i=v_ix_i\), with indices taken modulo \(\ell\).  For \(\ell=4\), delete
\(C\), introduce adjacent vertices \(a,b\), and add the edges
\(ax_0,ax_1,bx_2,bx_3\); see \cref{fig:4-cycle-reduction}.
  For \(\ell=5\), delete \(C\), introduce a path
\((a,b,c)\), and add the edges \(ax_0,ax_1,bx_2,cx_3,cx_4\);
see \cref{fig:5-cycle-reduction}.  In either case, the edges corresponding
to $s_0,\ldots,s_{\ell-1}$ are called the \df{attachments} of the reduction.

\begin{lemma}\label{lem:4-cycle-expansion}
    Let \(H\) be a path reduction of a 4-cycle in a cubic graph \(G\) of
    girth four.
    \begin{enumerate}[(i)]
        \item If \(H\) has a 1-planar drawing in which \(ab\) is uncrossed,
        then \(G\) has one in which \(c_0\) and \(c_2\) are uncrossed.

        \item If \(H\) has a 1-planar drawing in which \(ab\) and \(ax_1\) are
        uncrossed, then \(G\) has one in which \(c_0,c_1,c_2\) are uncrossed.
    \end{enumerate}
\end{lemma}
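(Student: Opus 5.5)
The plan is to expand the two adjacent vertices $a,b$ of $H$ back into the 4-cycle $C$ locally, inside a thin neighborhood of the uncrossed edge $ab$. The attachments then inherit whatever crossings the edges $ax_0,ax_1,bx_2,bx_3$ had in the drawing of $H$. Fix a 1-planar drawing $\Gamma$ of $H$ with $ab$ uncrossed. Since $ab$ is uncrossed, a sufficiently thin closed neighborhood $D$ of the arc $ab$ (a small disk around $a$, a small disk around $b$, and a thin strip along $ab$) is crossed by no edge other than the four edges at $a$ and $b$. Each of these meets $D$ in a single short initial segment. Around $a$ the rotation contains $ab$, $ax_0$, $ax_1$ in some cyclic order, and similarly around $b$. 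Going around the boundary of $D$, the four edges therefore leave in a cyclic order of the form $(ax_0,ax_1,bx_2,bx_3)$ or $(ax_1,ax_0,bx_2,bx_3)$, possibly reversed. The $a$-edges are consecutive and the $b$-edges are consecutive, but the order within each pair is unknown. I first note that $C$ is chordless because $G$ has girth four and is cubic, so the reduction is well defined; I will not need anything more about the $x_i$ for the drawing itself.

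For (i), I erase the drawing inside $D$ and place $v_0,v_1,v_2,v_3$ on the boundary points where $s_0,\dots,s_3$ exit. The attachments keep their crossings outside $D$, and they were crossed at most once in $\Gamma$. In the orientation $(x_0,x_1,x_2,x_3)$ I draw $C$ as a convex quadrilateral inside $D$ with no crossings at all. In the twisted orientation the exit order is $(x_1,x_0,x_2,x_3)$. I then draw the cycle $v_0v_1v_2v_3$ with $v_0$ and $v_1$ in swapped positions. The path $v_1v_2v_3v_0$ goes around the boundary, and the edge $c_0=v_0v_1$ connects two boundary-adjacent points. The cycle order then forces exactly one crossing inside $D$, namely between $c_1=v_1v_2$ and $c_3=v_3v_0$: these two edges are drawn as chords joining positions $(1,3)$ and $(4,2)$, which interleave. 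If the order at $b$ is also twisted, I apply the symmetric choice. That yields either a crossing of $c_1$ with $c_3$ again, or I can choose which pair crosses. The key check is that the only crossing introduced is a single crossing between $c_1$ and $c_3$. These two edges are new and otherwise uncrossed, so 1-planarity holds, and $c_0,c_2$ stay uncrossed as required. I would make this explicit by listing the four exit orders up to symmetry and, for each, giving the straight-line position of the quadrilateral inside $D$.

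For (ii), $ax_1$ is also uncrossed, so $D$ can be enlarged to contain the whole arc $ax_1$ together with a small disk around $x_1$. This gives freedom to reroute $s_1$: around $x_1$ I may move the endpoint where $s_1$ attaches to either side of $ax_0$'s exit by re-embedding the short uncrossed arc. In other words, I can choose which of the two local orders at $a$ we realize, because the uncrossed arc $ax_1$ can be redrawn on the other side of $ax_0$'s initial segment near $a$, inside the enlarged neighborhood. With the order at $a$ chosen to match the order at $b$, the quadrilateral is drawn planar except for possibly one crossing. That crossing can then be routed onto $c_3$ versus a second new edge. Alternatively, I let $s_1$ cross $c_0$'s neighbor $c_3$. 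Concretely, I aim for a drawing in which the only new crossing is between $c_3$ and the rerouted $s_1$. This is legitimate because $s_1$ was uncrossed in $\Gamma$, and it leaves $c_0,c_1,c_2$ uncrossed. The main obstacle is this case analysis for (ii): verifying in the twisted orientation at $b$ that the single forced crossing can always be placed on $c_3$ (paired with $s_1$ or $c_3$ with $s_2$-type edges). I would settle it by drawing the two twisted cases explicitly, with $x_1$ on the boundary of the enlarged disk.
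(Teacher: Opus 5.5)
Your part (i) follows the paper's argument. You take a crossing-free disk around the uncrossed edge $ab$ and use the two boundary orders $(v_0,v_1,v_2,v_3)$ and $(v_0,v_1,v_3,v_2)$ up to rotation and reflection: a planar quadrilateral for the first, and the single crossing $c_1\times c_3$ for the second. One correction: twisting both pairs gives $(v_1,v_0,v_3,v_2)$, which is the untwisted order reflected and needs no crossing. It does not give ``a crossing of $c_1$ with $c_3$ again'', and there is exactly one nontrivial case.

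Part (ii) has a gap. Your main mechanism is to re-embed the uncrossed arc $ax_1$ so that it leaves $a$ on the other side of $ax_0$, and this is not free. The edges $ax_0$, $bx_2$, $bx_3$ leave any neighborhood of the path $x_1ab$ at boundary points fixed by the outside drawing, as do the two other edges at $x_1$. A tree drawn without crossings in a disk, with its leaves on the boundary, has every rotation determined by the leaf order. So moving $ax_1$ to the other side costs a crossing with $ax_0$, $ab$, or an edge at $b$. Enlarging the disk to contain $x_1$ changes nothing: it only replaces $s_1$ by the two other edges at $x_1$, which are consecutive on the boundary, so the same two cases remain.

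The option you list as an alternative, a single crossing $s_1\times c_3$, is the right one and is exactly what the paper uses. But you leave its realizability open as ``the main obstacle''. You also hedge it with crossings of $c_3$ with $s_2$-type edges, which are not legitimate: only $ax_1$ is known to be uncrossed outside the disk, so any other attachment might receive a second crossing.

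The missing step is short. Inside the small disk around $ab$, draw $C$ as a convex quadrilateral with $v_0,v_1,v_2,v_3$ in this cyclic order. Send $s_0,s_2,s_3$ straight outward. Route $s_1$ through the interior of $C$, across $c_3$, to the boundary between the exits of $s_3$ and $s_0$.

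The boundary order is then $(v_0,v_2,v_3,v_1)$. Up to rotation this is the reflection of $(v_0,v_1,v_3,v_2)$, i.e., the unique twisted order. The only crossing is $s_1\times c_3$, which is between independent edges since $C$ is chordless. The edge $s_1$ has no other crossing because $ax_1$ was uncrossed, and $c_0,c_1,c_2$ are untouched. The untwisted order is handled by the planar quadrilateral.
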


\begin{figure}[!h]
    \centering
    \includegraphics[width=0.9\linewidth]{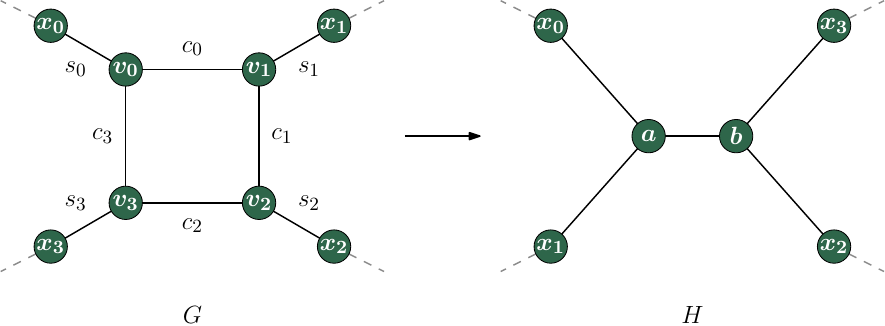}
    \caption{The construction of $H$ from $G$ in
    \cref{lem:4-cycle-expansion}.  The dashed curves indicate the parts of
    the graphs outside the displayed subgraphs.}
    \label{fig:4-cycle-reduction}
\end{figure}

\begin{proof}
    Consider a 1-planar drawing of $H$ satisfying the assumptions of the
    relevant part. We construct a drawing of $G$ by replacing $ab$
    and its endpoints with the 4-cycle, leaving the rest of the
    drawing unchanged.
    Since $G$ is triangle-free, $x_0\ne x_1$ and $x_2\ne x_3$,
    so $H$ is a simple cubic graph. Opposite external neighbors may coincide, but they
    are attached to different vertices of the replacement path.
    Choose a narrow disk around $ab$ and its endpoints
    that contains no crossings and meets the rest of the drawing only
    in the four attachment edges, at distinct boundary points.
    We describe the cyclic order of the attachments by their corresponding
    cycle vertices.
    The two attachments at each endpoint of $ab$ occur consecutively
    around the boundary. Up to cyclic rotation and reflection, the possible
    orders are $(v_0,v_1,v_2,v_3)$ and $(v_0,v_1,v_3,v_2)$.
    Reflection preserves which edges are uncrossed.
    Inside the disk, split $a$ into $v_0,v_1$ and $b$ into $v_2,v_3$,
    keeping the attachment boundary points fixed.

    For part~(i), the left subfigure of \cref{fig:4-cycle-expansion}
    shows how to reconstruct a 1-planar drawing of $G$ from the
    drawing of $H$ for each cyclic order. The first drawing has no
    crossing, while the second has exactly one crossing, between $c_1$
    and $c_3$. No attachment gains a crossing, so the
    resulting drawing is 1-planar and $c_0,c_2$ remain uncrossed.

    For part~(ii), the first drawing again applies without a
    crossing. For the second cyclic order, the right subfigure shows how
    to reconstruct a 1-planar drawing of $G$ from the drawing of $H$,
    using only a crossing between $c_3$ and $s_1$. The edge $ax_1$, which
    becomes $s_1$, is uncrossed outside the disk by assumption.  Thus the
    resulting drawing is 1-planar and $c_0,c_1,c_2$ remain uncrossed.
\end{proof}

\begin{figure}[!t]
    \centering
    \begin{subfigure}[t]{0.573\linewidth}
        \centering
        \includegraphics[width=\linewidth]{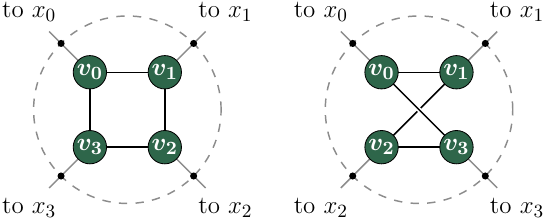}
        \caption*{part~(i)}
    \end{subfigure}
    \hfill
    \begin{subfigure}[t]{0.267\linewidth}
        \centering
        \includegraphics[width=\linewidth]{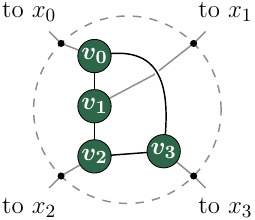}
        \caption*{part~(ii)}
    \end{subfigure}
    \caption{Reconstructing the 4-cycle for \cref{lem:4-cycle-expansion}:
    part~(i) on the left and part~(ii) on the right. Black dots mark
    where attachments meet the dashed disk boundaries.}
    \label{fig:4-cycle-expansion}
\end{figure}

\begin{lemma}\label{lem:5-cycle-path-expansion}
    Let $H$ be a path reduction of a 5-cycle in a cubic graph $G$ of
    girth five.  If $H$ has a 1-planar drawing in which $ab$, $bc$, and
    $bx_2$ are uncrossed, then $G$ is 1-planar.
\end{lemma}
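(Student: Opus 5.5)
The plan is to mimic the proof of \cref{lem:4-cycle-expansion}: take a 1-planar drawing of $H$ with $ab$, $bc$, $bx_2$ uncrossed, cut out a small neighbourhood of the path $(a,b,c)$, and redraw the 5-cycle $C=(v_0,\ldots,v_4)$ inside it, leaving everything outside unchanged. First note that $H$ is a simple cubic graph. Girth five gives $x_0\ne x_1$ and $x_3\ne x_4$, and no $x_i$ lies on $C$. Coincidences among attachments at different vertices of the path do no harm.

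Since $ab$ and $bc$ are uncrossed, we can choose a narrow disk $D$ around $a$, $b$, $c$ and the edges $ab,bc$. It contains no crossing and meets the rest of the drawing only in the five attachment edges, at distinct boundary points. Label each boundary point by the cycle vertex it will be attached to. Around $\partial D$, the two points of $a$ (labels $v_0,v_1$) are consecutive, and so are the two points of $c$ (labels $v_3,v_4$). The point $v_2$ of $b$ lies on one of the two sides between these pairs. Up to reflection, the cyclic order is therefore $(p,q,v_2,r,s)$ with $\{p,q\}=\{v_0,v_1\}$ and $\{r,s\}=\{v_3,v_4\}$, and the arc from $s$ back to $p$ contains no attachment. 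This leaves four cases. In each case, place the cycle vertices close to their boundary points, so that no attachment gains a crossing inside $D$.

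\emph{Case $(v_0,v_1,v_2,v_3,v_4)$.} Draw $C$ as a convex pentagon in this order. There are no crossings.

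\emph{Cases $(v_1,v_0,v_2,v_3,v_4)$ and $(v_0,v_1,v_2,v_4,v_3)$.} These are symmetric, so take the first. With the vertices in convex position in this order, the edges $c_0,c_2,c_3$ lie along the boundary. The only chords are $c_1=v_1v_2$ and $c_4=v_4v_0$, which cross exactly once. Hence the drawing stays 1-planar.

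\emph{Case $(v_1,v_0,v_2,v_4,v_3)$.} This is the main obstacle. Drawing the vertices in this order would make $c_4$ cross both $c_1$ and $c_2$. Here we use that $bx_2$ is uncrossed in $H$. Place $v_2$ not near its boundary point $X$, but on the empty arc between $v_3$ and $v_1$. The convex order of the vertices and of $X$ is then $v_1,v_0,X,v_4,v_3,v_2$. Now $c_0,c_1,c_2,c_3$ are sides of the polygon and $c_4=v_4v_0$ is its only chord. Route $s_2$ inside $D$ from $v_2$ to $X$. It must cross $c_4$, which separates $X$ from $v_1,v_2,v_3$, and it can be drawn to cross only $c_4$, exactly once. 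Since $s_2$ has no crossing outside $D$, and $c_4$ has no other crossing, the drawing is 1-planar.

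In every case the attachments keep their crossings from the drawing of $H$, except $s_2$, which gains at most one. This yields a 1-planar drawing of $G$. Finally, I would double-check that reflection covers all orders, and confirm that the cases really are exhausted by the side on which $v_2$ appears and by the two swaps.
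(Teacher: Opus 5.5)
Your proposal is correct and follows essentially the paper's proof: a crossing-free disk around the uncrossed path $(a,b,c)$, the same four boundary orders with two of them identified by reversing the path, and the same three local drawings. These are no crossing; one crossing between two cycle edges (you take the mirror case $c_1\times c_4$, while the paper draws $c_2\times c_4$); and the single crossing $c_4\times s_2$, which is allowed because $bx_2$ is uncrossed in $H$. (Minor point: girth five actually forces all five $x_i$ to be distinct, so your remark about coincident attachments is moot.)
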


\begin{figure}[h]
    \centering
    \includegraphics[width=0.9\linewidth]{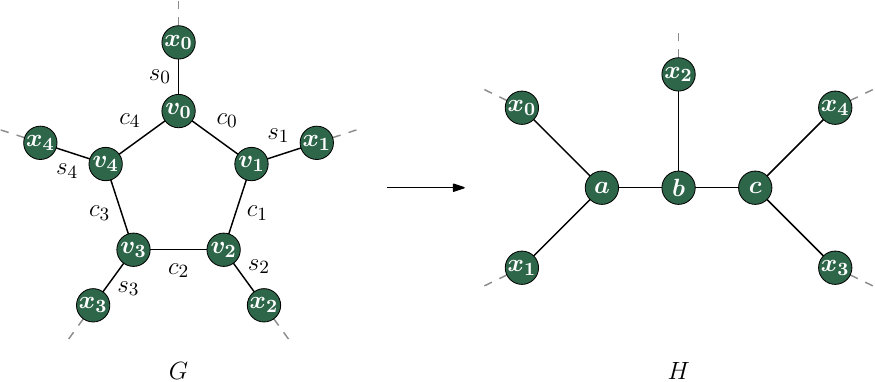}
    \caption{The 5-cycle-to-path reduction from $G$ to $H$.  The dashed
    curves indicate the parts of the graphs outside the displayed
    subgraphs.}
    \label{fig:5-cycle-reduction}
\end{figure}

\begin{proof}
    Consider a 1-planar drawing of $H$ in which $ab$, $bc$, and $bx_2$
    are uncrossed. We reconstruct a drawing of $G$ inside a small disk
    around the path $(a,b,c)$, leaving the drawing outside the disk
    unchanged. Choose the disk to contain no crossings and to meet the
    five attachments at distinct boundary points.
    The attachments corresponding to $v_0,v_1$ occur consecutively around
    the boundary, as do those corresponding to $v_3,v_4$.
    Up to cyclic rotation and reflection, there are four distinct boundary orders.
    Reversing the path, with the relabeling $a\leftrightarrow c$,
    $v_0\leftrightarrow v_4$, and $v_1\leftrightarrow v_3$, identifies
    two of these orders and preserves the assumption that $bx_2$ is uncrossed.
    \Cref{fig:5-cycle-path-expansion} shows how to reconstruct a 1-planar
    drawing of $G$ from the drawing of $H$ for the three remaining orders.
    The only attachment that can gain a crossing is $s_2=v_2x_2$.
    It corresponds to $bx_2$, which is uncrossed in the drawing of $H$.
    Thus every edge is crossed at most once.
\end{proof}

\begin{figure}[!t]
    \centering
    \includegraphics[width=\linewidth]{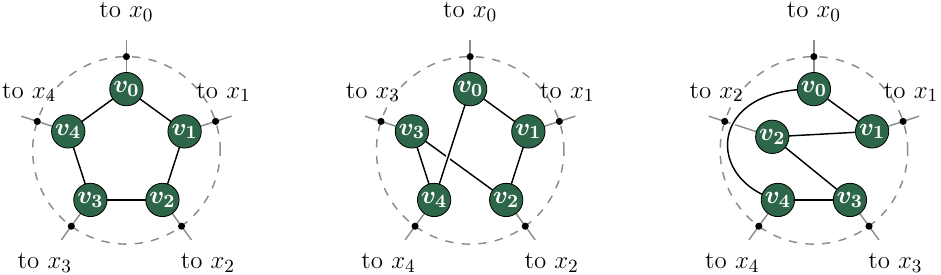}
    \caption{Reconstructing the 5-cycle $(v_0,\ldots,v_4)$ inside a disk
    for the three attachment orders in \cref{lem:5-cycle-path-expansion}.
    From left to right: the drawings have no crossing, a crossing between
    $v_2v_3$ and $v_4v_0$, and a crossing between $v_4v_0$ and the attachment
    toward $x_2$. Black dots mark where attachments meet the dashed disk boundaries.}
    \label{fig:5-cycle-path-expansion}
\end{figure}

We also define a \df{vertex reduction} as follows.
Let $C=(v_0,\ldots,v_4)$ be a 5-cycle in a cubic graph of girth five,
let $x_i$ be the neighbor of $v_i$
outside $C$, and write $c_i=v_iv_{i+1}$ and $s_i=v_ix_i$, with indices
modulo five.  The vertex reduction of $C$ is the graph obtained by contracting
$C$ to one vertex; its attachments are the five edges corresponding to
$s_0,\ldots,s_4$.

\begin{lemma}\label{lem:5-cycle-star-expansion}
    Let $H$ be the vertex reduction of a 5-cycle $C$ in a cubic graph $G$
    of girth five.
    \begin{enumerate}[(i)]
        \item If $H$ has a 1-planar drawing in which all five attachments
        are uncrossed, then, for every $i$, $G$ has a 1-planar drawing in
        which the three edges incident with $v_i$ are uncrossed.

        \item Let $e\in E(H)$.  If $H$ has a 1-planar drawing in which $e$
        and at least three attachments different from $e$ are uncrossed,
        then $G$ has a 1-planar drawing in which the edge corresponding
        to $e$ is uncrossed.
    \end{enumerate}
\end{lemma}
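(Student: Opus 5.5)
The approach is the same local replacement used for \cref{lem:4-cycle-expansion,lem:5-cycle-path-expansion}. Take the given 1-planar drawing of $H$ and let $z$ be the contracted vertex. Choose a small disk $D$ around $z$ that contains no crossing and meets the rest of the drawing only in the five attachments, at distinct boundary points. Outside $D$ the drawing is left unchanged, and inside $D$ we redraw the 5-cycle and the initial segments of the attachments. The only data that matter is the cyclic order in which the attachments of $v_0,\ldots,v_4$ leave $D$; it is a permutation $\pi$ of $(v_0,\ldots,v_4)$ up to rotation and reflection. Since $G$ has girth five, the $x_i$ need not be distinct, but each $s_i$ is a separate edge, so $H$ may have parallel edges only if two $x_i$ coincide. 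I would first note that girth five forbids $x_i=x_{i+1}$, since that would create a triangle. A coincidence $x_i=x_{i+2}$ would create a 4-cycle, so all $x_i$ are in fact distinct and $H$ is simple.

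\emph{Part~(ii).} Here the plan is a crossing-count argument inside $D$. Place $v_0,\ldots,v_4$ near the boundary points of their attachments, so every $s_i$ is a short uncrossed segment inside $D$. Draw the cycle $C$ as a closed curve through these points in the order $0,1,2,3,4$. The pentagon on the five boundary points, taken in the boundary order $\pi$, is drawn convexly. The five chords $c_i$ are straight segments between consecutive cycle vertices. For a permutation of five points in convex position, a closed 5-path crosses itself at most five times, with equality for the pentagram; in the pentagram each chord is crossed twice. Hence I would not draw all chords straight. Instead I would route some cycle edges outside $D$, alongside uncrossed attachments: an edge $c_i$ can leave $D$, run parallel to an uncrossed attachment, and come back, using that attachment's corridor. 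Doing this is exactly why uncrossed attachments are needed. The case analysis is over the (few) orders $\pi$ up to dihedral symmetry: the identity, orders with one transposition of neighbors, and the pentagram order. For each, I would exhibit a drawing in which the chords $c_i$ form a 1-planar configuration, possibly crossing a chosen attachment once, and in which the attachments that gain a crossing are among those assumed uncrossed.

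\emph{Part~(i).} With all five attachments uncrossed, we get full freedom: each attachment can absorb one crossing. For a given $i$, I would draw $v_i$ and its two cycle edges $c_{i-1},c_i$ plus $s_i$ without crossings. Then the remaining path $v_{i+1}\ldots v_{i-1}$ is drawn so that its needed crossings land on $c$-edges not incident to $v_i$, or on attachments $s_j$ with $j\neq i$. Since only four cycle vertices remain, I expect at most two crossings to be needed, and they can be placed on $s_j$ and $c_j$ away from $v_i$.

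\emph{Part~(ii), continued.} At least three attachments other than $e$ are uncrossed, and $e$ must stay uncrossed. If $e$ is not an attachment, the redrawing happens inside $D$, which $e$ never enters. In both cases we must then show that for every order $\pi$, with at most two attachments forbidden from gaining a crossing, a 1-planar local redrawing exists that creates crossings only among cycle edges or on the three permitted attachments. I expect this to be the main obstacle: it requires checking all combinations of the order $\pi$ and the position of the two "bad" attachments. I would handle it by showing that one crossing per order suffices (as in \cref{fig:5-cycle-path-expansion}) and that the crossing can be shifted to a crossing between two cycle edges, or onto any of several attachments, so that one of the three allowed ones is always available.
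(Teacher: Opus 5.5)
Your framework matches the paper's: a crossing-free disk around the contracted vertex, a case split on the cyclic order in which the attachments leave the disk, and the fact that an attachment uncrossed outside the disk can absorb one crossing inside it. Your check that the $x_i$ are distinct is correct but not needed.

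The gap is that the lemma \emph{is} the claim that a suitable local drawing exists for each of the twelve boundary orders, and you do not supply these drawings. Worse, the principles you propose for producing them are false.
\begin{itemize}
\item For the order $(v_0,v_2,v_4,v_1,v_3)$, the cycle, the attachments and the disk boundary together form the Petersen graph, whose crossing number is $2$. So ``one crossing per order suffices'' fails in part~(ii).
\item Where one crossing does suffice, it cannot be shifted at will. For $(v_0,v_1,v_3,v_4,v_2)$ the only one-crossing local drawing is $c_4\mathbin\times s_2$: $s_2$ is the only attachment, and $c_4$ the only cycle edge, whose deletion makes the local configuration planar. Once $s_2$ may not be crossed you need two crossings, e.g.\ $c_1\mathbin\times s_0$ and $c_2\mathbin\times c_4$.
\item Your bound of two crossings in part~(i) also fails for the pentagram order. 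A two-crossing 1-planar local drawing there must consist of $c_a\mathbin\times s_{a+3}$ and $s_{a+2}\mathbin\times s_{a+4}$ for some $a$. These crossings touch every vertex of $C$, so keeping the star of $v_0$ uncrossed needs three crossings, such as $c_1\mathbin\times c_3$, $s_1\mathbin\times s_3$, $s_2\mathbin\times s_4$.
\item Your list of order types (identity, one adjacent transposition, pentagram) misses a class, e.g.\ $(v_0,v_1,v_3,v_4,v_2)$. In any case, the distinguished vertex in~(i) and the constrained attachments in~(ii) break the dihedral symmetry you would use to reduce cases.
\item The ``corridor'' routing adds nothing. A detour along an uncrossed attachment that returns to the disk is equivalent to a curve inside the disk that crosses this attachment at most once.
\end{itemize}

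For part~(ii) you are also missing the observation that removes the need to check `all combinations of $\pi$ and the bad attachments'. A 5-cycle has no independent set of size three, so two of the three uncrossed attachments belong to consecutive vertices, which you may relabel $v_0,v_1$. It then suffices to exhibit, for each of the twelve orders, one local drawing in which $s_0$ and $s_1$ are the only attachments that gain crossings. Such a drawing never crosses $e$: either $e$ lies outside the disk, or it is an attachment other than $s_0,s_1$. Part~(i) reduces similarly, after relabelling the chosen vertex as $v_0$, to one local drawing per order that keeps $c_4,c_0,s_0$ uncrossed. These explicit drawings, twelve for each part and listed in \cref{tab:5-cycle-star-expansion}, are the proof. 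Without them your proposal is an outline whose guiding estimates are wrong.
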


\begin{proof}
    Consider a 1-planar drawing of $H$ satisfying the assumptions of the
    relevant part. We reconstruct a drawing of $G$ by replacing the
    contracted vertex with the 5-cycle inside a small disk, leaving the
    drawing outside the disk unchanged. Choose the disk to contain no
    crossings and to meet the five attachments at distinct boundary points.
    Up to cyclic rotation and reflection, there are $(5-1)!/2=12$
    possible boundary orders, listed in \cref{tab:5-cycle-star-expansion}
    using the corresponding cycle vertices.
    \Cref{fig:5-cycle-vertex-expansion} shows drawings realizing the listed
    crossings for two boundary orders in both parts; the remaining cases
    are analogous.

    For part~(i), relabel the cycle so that the chosen vertex is $v_0$.
    It suffices to keep the three edges $c_4,c_0,s_0$ incident with $v_0$
    uncrossed. For every boundary order, there is a drawing inside the disk
    with that order and the crossings listed in the second column of
    \cref{tab:5-cycle-star-expansion}. These crossings leave $c_4,c_0,s_0$
    uncrossed, and no edge occurs in more than one listed crossing.
    Every attachment is uncrossed outside the disk by assumption.
    Thus the resulting drawing is 1-planar and proves part~(i).

    For part~(ii), assume that $H$ has a 1-planar drawing in which $e$ and
    three attachments different from $e$ are uncrossed. Among the three
    corresponding vertices of $C$, two must be consecutive, since a 5-cycle
    has no independent set of size three. Relabel the cycle so that these
    two attachments correspond to $s_0,s_1$. For every boundary order,
    there is a drawing inside the disk with that order and the crossings
    listed in the third column of \cref{tab:5-cycle-star-expansion}.
    No edge occurs in more than one listed crossing, and only
    $s_0,s_1$ may gain crossings among the attachments. Since these are
    uncrossed outside the disk, the resulting drawing is 1-planar.
    Moreover, the local drawing does not cross $e$: if $e$ is not an
    attachment, it lies outside the disk, and if it is an attachment, it is
    different from the attachments corresponding to $s_0,s_1$.
    Thus the edge corresponding to $e$ remains uncrossed.
\end{proof}

\begin{table}[!t]
    \centering
    \caption{Crossing patterns for the local expansions in
    \cref{lem:5-cycle-star-expansion}. The second column keeps
    $c_4,c_0,s_0$ uncrossed as required in part~(i); in the third column,
    only the attachments $s_0,s_1$ participate in crossings, as required
    in part~(ii). Here $\varnothing$ denotes no crossings.}
    \label{tab:5-cycle-star-expansion}
    \begin{tabular*}{\textwidth}{@{\extracolsep{\fill}}>{$}c<{$}>{$}l<{$}>{$}l<{$}@{}}
        \toprule
        \multicolumn{1}{c}{cyclic order} &
        \multicolumn{1}{c}{crossing pattern, part~(i)} &
        \multicolumn{1}{c}{crossing pattern, part~(ii)} \\
        \midrule
        (v_0,v_1,v_2,v_3,v_4)&\varnothing&\varnothing\\
        (v_0,v_1,v_2,v_4,v_3)&c_2\mathbin\times s_4&c_2\mathbin\times c_4\\
        (v_0,v_1,v_3,v_2,v_4)&c_1\mathbin\times c_3&c_1\mathbin\times c_3\\
        (v_0,v_1,v_3,v_4,v_2)&c_1\mathbin\times c_3,s_2\mathbin\times s_4&
               c_1\mathbin\times s_0,c_2\mathbin\times c_4\\
        (v_0,v_1,v_4,v_2,v_3)&c_1\mathbin\times s_4&
               c_1\mathbin\times c_3,c_4\mathbin\times s_1\\
        (v_0,v_1,v_4,v_3,v_2)&c_1\mathbin\times s_4,c_3\mathbin\times s_2&
               c_1\mathbin\times c_4\\
        (v_0,v_2,v_1,v_3,v_4)&c_2\mathbin\times s_1&c_0\mathbin\times c_2\\
        (v_0,v_2,v_1,v_4,v_3)&c_2\mathbin\times s_1,s_3\mathbin\times s_4&
               c_2\mathbin\times s_0\\
        (v_0,v_2,v_3,v_1,v_4)&c_3\mathbin\times s_1&c_3\mathbin\times s_1\\
        (v_0,v_2,v_4,v_1,v_3)&
               \begin{array}[t]{@{}l@{}}
                   c_1\mathbin\times c_3,s_1\mathbin\times s_3,s_2\mathbin\times s_4\\[-2pt]
                   \text{\footnotesize Figure~\ref{fig:5-cycle-vertex-expansion}(a)}
               \end{array}&
               \begin{array}[t]{@{}l@{}}
                   c_1\mathbin\times c_4,c_2\mathbin\times s_0,c_3\mathbin\times s_1\\[-2pt]
                   \text{\footnotesize Figure~\ref{fig:5-cycle-vertex-expansion}(c)}
               \end{array}\\
        (v_0,v_3,v_1,v_2,v_4)&
               \begin{array}[t]{@{}l@{}}
                   c_1\mathbin\times c_3,s_1\mathbin\times s_3\\[-2pt]
                   \text{\footnotesize Figure~\ref{fig:5-cycle-vertex-expansion}(b)}
               \end{array}&
               \begin{array}[t]{@{}l@{}}
                   c_0\mathbin\times c_2,c_3\mathbin\times s_0\\[-2pt]
                   \text{\footnotesize Figure~\ref{fig:5-cycle-vertex-expansion}(d)}
               \end{array}\\
        (v_0,v_3,v_2,v_1,v_4)&c_1\mathbin\times s_3,c_3\mathbin\times s_1&
               c_0\mathbin\times c_3\\
        \bottomrule
    \end{tabular*}
\end{table}

\begin{figure}[!t]
    \centering
    \begin{subfigure}[t]{0.47\linewidth}
        \centering
        \includegraphics[width=\linewidth]{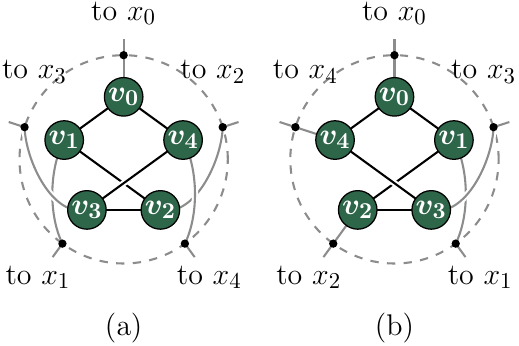}
        \caption*{part~(i)}
    \end{subfigure}\hfill%
    \begin{subfigure}[t]{0.47\linewidth}
        \centering
        \includegraphics[width=\linewidth]{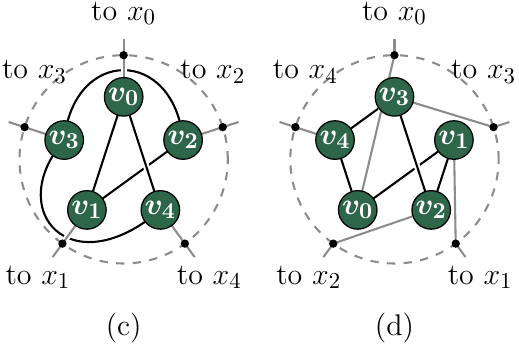}
        \caption*{part~(ii)}
    \end{subfigure}
    \caption{Reconstructing the 5-cycle $(v_0,\ldots,v_4)$ inside a disk
    in \cref{lem:5-cycle-star-expansion}. Drawings (a,b) keep all three
    edges incident with $v_0$ uncrossed. In drawings (c,d), only the
    attachments toward $x_0$ and $x_1$ may gain crossings.
    Black dots mark where attachments meet the dashed disk boundaries;
    each edge is crossed at most once.}
    \label{fig:5-cycle-vertex-expansion}
\end{figure}

As the final result of this section, we show how to reduce $\mathcal P_k(n)$
to $\mathcal P_{k+1}(n-2)$.

\begin{lemma}\label{lem:flexibility-reduction}
    Let $n\geq4$ be even and let $k\in\{0,1,2\}$. Assume that
    \begin{enumerate}[(a)]
        \item every cubic graph of order at most $n-2$ is
        $(k+1)$-flexible, that is, $\mathcal P_{k+1}(n-2)$ holds, and
        \item every biconnected, nonplanar cubic graph of order $n$ and
        girth at least five is $k$-flexible.
    \end{enumerate}
    Then every cubic graph of order at most $n$ is $k$-flexible, that is,
    $\mathcal P_k(n)$ holds.
\end{lemma}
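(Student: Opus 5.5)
We argue by a minimal counterexample, reducing a 4-cycle via \cref{lem:4-cycle-expansion}. Suppose $\mathcal P_k(n)$ fails. Every cubic graph of order at most $n-2$ is $(k+1)$-flexible by~(a), hence $k$-flexible. By \cref{lem:flexibility-core}, a smallest subcubic graph $G$ that is not $k$-flexible is biconnected, cubic, and triangle-free; by the preceding remark it has order exactly $n$. If $G$ is planar, a planar drawing keeps every edge uncrossed, so $G$ is $k$-flexible. If $G$ has girth at least five, it is $k$-flexible by~(b). Hence $G$ has girth four, and we fix a 4-cycle $C=(v_0,v_1,v_2,v_3)$, which is chordless since $G$ is triangle-free. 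Fix $F\subseteq E(G)$ with $|F|\le k$ that no 1-planar drawing keeps uncrossed. We aim to contradict this by producing such a drawing from a path reduction $H$ of $C$. Here $H$ is a simple cubic graph of order $n-2$ (as noted in the proof of \cref{lem:4-cycle-expansion}), so $H$ is $(k+1)$-flexible.

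The edges of $G$ outside $C$ correspond bijectively to edges of $H$ other than $ab$: attachments $s_i$ correspond to the edges at $a$ or $b$, and all other edges are unchanged. Let $F'$ be the set of images of $F\setminus E(C)$, so $|F'|\le k-|F\cap E(C)|$. The expansions in \cref{lem:4-cycle-expansion} leave the drawing outside the disk unchanged. In part~(i), no attachment gains a crossing. In part~(ii), the only new crossing on an attachment is $c_3\mathbin\times s_1$. We choose the labeling of $C$ (rotation and reflection) according to $F\cap E(C)$.

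\emph{Case 1:} $F\cap E(C)$ contains no two adjacent cycle edges, so it is empty, a single edge, or one opposite pair. Label $C$ so that $F\cap E(C)\subseteq\{c_0,c_2\}$. Set $F_H=F'\cup\{ab\}$, which has size at most $k+1$. Then $H$ has a 1-planar drawing with $F_H$ uncrossed. Part~(i) of \cref{lem:4-cycle-expansion} yields a drawing of $G$ with $c_0,c_2$ uncrossed, and every other edge of $F$ keeps its uncrossed status.

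\emph{Case 2:} $F$ contains two adjacent edges of $C$. Since $k\le2$, we have $F\subseteq E(C)$ and $|F|=2$. Label $C$ so that $F=\{c_0,c_1\}$. Set $F_H=\{ab,ax_1\}$, which has size $2\le k+1$. Part~(ii) of \cref{lem:4-cycle-expansion} gives a drawing of $G$ with $c_0,c_1,c_2$ uncrossed. The new crossing $c_3\mathbin\times s_1$ is harmless, because neither $c_3$ nor $s_1$ lies in $F$.

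The main point needing care is Case~2. There the expansion introduces a crossing on the attachment $s_1$, so we must rule out that $s_1$ is also prescribed. This is exactly where the bound $k\le 2$ is used: it forces $F$ to consist only of the two cycle edges. It also keeps the budget $|F_H|\le k+1$, even after adding both $ab$ and $ax_1$.
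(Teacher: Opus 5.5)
Your proof is correct and follows essentially the same route as the paper. Both arguments take a minimal counterexample, which \cref{lem:flexibility-core} and assumption~(b) force to be a biconnected, nonplanar cubic graph of order $n$ with a 4-cycle. Both then apply a path reduction, using part~(i) of \cref{lem:4-cycle-expansion} when $F\cap E(C)\subseteq\{c_0,c_2\}$, and part~(ii) with $ab,ax_1$ uncrossed when $F$ consists of two adjacent cycle edges. You are more explicit than the paper about why edges of $F$ outside $C$ stay uncrossed: the expansion only changes a crossing-free disk and, in part~(i), adds no crossing on an attachment. The one step you leave tacit is that the order is exactly $n$, which follows because cubic graphs have even order.
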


\begin{proof}
    For the sake of contradiction, suppose that $\mathcal P_k(n)$ is false.
    Among all subcubic graphs that
    are not $k$-flexible, choose one, say $G$, of minimum order.
    Then $G$ is nonplanar and $|V(G)|\leq n$. By \cref{lem:flexibility-core},
    $G$ is biconnected, cubic, and
    triangle-free.  Moreover, $\mathcal P_{k+1}(n-2)$ implies
    $\mathcal P_k(n-2)$.  Since cubic graphs have even order, $G$ therefore
    has order exactly $n$. Furthermore, $G$ contains a 4-cycle: otherwise,
    since $G$ is triangle-free, assumption~(b) would make it $k$-flexible.

    Let $C$ be a 4-cycle in $G$, and choose a set $F\subseteq E(G)$ of
    at most $k$ edges that cannot all be kept uncrossed in a 1-planar
    drawing of $G$. Suppose first that $F$ does not consist of two
    adjacent edges of $C$. Since $|F|\leq2$, label the cycle so that
    $F\cap E(C)\subseteq\{c_0,c_2\}$, and take the corresponding path
    reduction $H$. Since $G$ is triangle-free, $H$ is a simple cubic
    graph of order $n-2$.
    Since $H$ is $(k+1)$-flexible, it has a 1-planar drawing in which
    $ab$ and the edges corresponding to $F\setminus E(C)$ are uncrossed:
    there are at most $k+1$ such edges. Part~(i) of
    \cref{lem:4-cycle-expansion} reconstructs a drawing of $G$ in which
    every edge of $F$ is uncrossed, a contradiction.

    Finally, suppose that $F$ consists of two adjacent edges of $C$, so $k=2$.
    Label the vertices of $C$ so that these edges are $c_0=v_0v_1$
    and $c_1=v_1v_2$, and consider the corresponding path reduction $H$.
    Since $H$ is $(k+1)$-flexible, it has a 1-planar drawing in which
    $ab$ and $ax_1$ are uncrossed. Part~(ii) of \cref{lem:4-cycle-expansion} again
    produces a drawing of $G$ in which every edge of $F$ is uncrossed, a
    contradiction.
\end{proof}

\subsection{Verification}
\label{sect:facts}

To prove \cref{thm:cubic28}, we establish five computational facts about
biconnected, nonplanar cubic graphs.  The computations for
\cref{clm:flexible22,clm:flexible24,clm:girth6-28} tested the relevant graphs
directly. For each instance, we found a crossing pattern that avoids any
edges required to remain uncrossed and checked the planarity of the
resulting graph. The computations for \cref{clm:flexible26,clm:girth5-28} first
applied the 5-cycle reductions and directly tested the instances not resolved by the
reduced graphs. The numbers of tested instances and the corresponding
running times are summarized in \cref{tab:cubic28-runtime}.
We generated the graph collections
using \texttt{Minibaum}~\cite{Brinkmann96} and \texttt{nauty}~\cite{nauty}.

\begin{fact}\label{clm:flexible22}
    Every biconnected, nonplanar cubic graph with $n\leq 22$ and girth $\geq 4$
    is $3$-flexible.
\end{fact}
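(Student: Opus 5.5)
This fact is a finite computational claim, so the plan is an exhaustive, certified enumeration rather than a combinatorial argument. First we generate the relevant graphs. Using \texttt{Minibaum} (or \texttt{geng} from \texttt{nauty} with the options for connected cubic graphs and girth at least four), we list all connected cubic graphs of girth at least four on $n\in\{4,6,\ldots,22\}$ vertices. We then keep only the biconnected ones and discard planar ones with a linear-time planarity test. Planar graphs are trivially $k$-flexible for every $k$, since a planar drawing leaves every edge uncrossed. This leaves the roughly $1.5$ million instances reported in \cref{tab:cubic28-runtime}.

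Next we reduce the number of queries per graph. Naively, $3$-flexibility asks, for each graph, a separate question for every set $F$ of at most three edges; there are $\binom{33}{3}$ such sets for $n=22$. Two observations cut this down. First, it suffices to test sets with $|F|=3$, since supersets only strengthen the requirement. Second, by automorphism invariance, we only need one representative per orbit of $3$-edge sets under $\mathrm{Aut}(G)$, which we compute with \texttt{nauty}.

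The main device, however, is \emph{certificate reuse}. Each 1-planar drawing we find comes with a crossing pattern $M$, and we record the set $U(M)$ of edges that $M$ avoids. Every $F\subseteq U(M)$ is then settled at once. So we iterate as follows:
\begin{enumerate}[(1)]
    \item pick a still-uncovered triple $F$;
    \item call \oops with the edges of $F$ fixed uncrossed;
    \item obtain a pattern $M$ and mark every triple inside $U(M)$ (and its automorphic images) as covered.
\end{enumerate}
To make coverage fast, we would steer the solver toward patterns with few crossings, so that $U(M)$ is large. A drawing of a cubic graph on $22$ vertices typically needs only a handful of crossings, so most triples are covered by a few drawings.

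Each positive answer is certified independently of the SAT solver. We build the planarization $G_M$ described in \cref{sect:oops}, check that $M$ avoids $F$, and verify planarity of $G_M$ with a standard planarity test. This certifies a 1-planar drawing keeping $F$ uncrossed, so correctness does not rely on the SAT solver or the propagator.

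The expected obstacle is the run time on the hardest instances. These are graphs of order $20$ and $22$ that need many crossings, where some triples (for example, three edges around one vertex, or edges in a short cycle) force the solver to search long before finding a drawing. If some query timed out or came back unsatisfiable, the fact would fail as stated. We would first retry with the separating-cycle propagator disabled and with different random seeds and orderings of the triple queue. As a fallback, a triple containing a 4-cycle edge could in principle be handled by reducing along that 4-cycle via \cref{lem:4-cycle-expansion}, though the direct computation is expected to succeed.
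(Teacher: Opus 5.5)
Your proposal is correct and follows the paper's approach: the paper also directly tests all $1{,}538{,}493$ biconnected, nonplanar cubic graphs of girth at least four and order at most $22$, and for every $F$ with $|F|\le 3$ it finds a crossing pattern avoiding $F$ whose planarization is checked to be planar. Your restriction to triples, your use of $\mathrm{Aut}(G)$-orbit representatives, and your reuse of each certificate for all triples inside $U(M)$ are sound implementation refinements the paper does not describe, and the 4-cycle fallback is never needed.
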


\begin{proof}[Proof (computational)]
    We tested all $1{,}538{,}493$ nonisomorphic biconnected, nonplanar
    cubic graphs of order at most $22$ and girth at least four.
    For every such graph $G$ and every set $F\subseteq E(G)$ with
    $|F|\leq3$, the computation found a 1-planar drawing of $G$ in which
    every edge of $F$ is uncrossed.
\end{proof}

\begin{fact}\label{clm:flexible24}
    Every biconnected, nonplanar cubic graph with $n=24$ and girth $\geq 5$
    is $2$-flexible.
\end{fact}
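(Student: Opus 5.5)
The argument is computational, in the same style as the proof of \cref{clm:flexible22}. I would first generate all connected cubic graphs of order $24$ and girth at least five with \texttt{Minibaum}~\cite{Brinkmann96}, then use \texttt{nauty}~\cite{nauty} to filter out those that are not biconnected or are planar. \Cref{tab:cubic28-runtime} predicts $1{,}620{,}470$ remaining instances. For each instance $G$, I need to show that every set $F\subseteq E(G)$ with $|F|\leq 2$ is avoided by some crossing pattern $M$ whose planarization $G_M$ is planar. By the Preliminaries, such a pattern yields a 1-planar drawing of $G$ in which every edge of $F$ is uncrossed.

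There are $36$ edges, so naively each graph has $\binom{36}{2}+36+1=667$ sets $F$. Two ideas cut this down.

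\begin{itemize}
    \item Only the inclusion-maximal sets, the pairs, need to be checked.
    \item Pairs can be taken up to the action of $\mathrm{Aut}(G)$ on edge pairs, computed with \texttt{nauty}.
\end{itemize}

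Even more effectively, I would reuse drawings greedily. Once a 1-planar drawing is found with pattern $M$, every pair of edges avoided by $M$ is certified at once. Since a pattern in a cubic graph typically leaves many edges uncrossed, I would maintain the set of still-uncertified pairs. Each SAT call to \oops then demands that one uncertified pair stay uncrossed, and after each call I mark every pair that the returned pattern avoids. Preferring solutions with few crossings, or adding soft preferences that leave uncertified edges uncrossed, should certify most pairs within a handful of calls.

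Every SAT call here is a satisfiability check, so the output is a concrete certificate, namely $M$ together with a planarity test of $G_M$. These certificates can be verified independently with a linear-time planarity test, so correctness does not depend on trusting the SAT solver.

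The main obstacle is running time rather than any mathematical step: about $1.6$ million graphs, each needing possibly several SAT calls. I expect the greedy pair-covering plus automorphism reduction to keep the average cost to a fraction of a second per graph, which is consistent with the reported $32$ hours on $48$ cores. If some graph resists, for instance because some pair of edges apparently must be crossed, that would be a genuine counterexample. I would then fall back to an exhaustive check of that specific pair with the propagator of \cref{sect:sepcycle-propagation}. I expect no such case to arise, since girth-five cubic graphs of order $24$ are far below the order-$30$ threshold.
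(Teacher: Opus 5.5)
Your proposal is correct and takes the same route as the paper: enumerate the $1{,}620{,}470$ biconnected, nonplanar cubic graphs of order $24$ and girth at least five, and for every set $F$ of at most two edges exhibit a crossing pattern that avoids $F$ and has a planar planarization. Checking only pairs, working up to automorphism, and reusing patterns greedily are sound implementation refinements the paper does not describe, and they do not change the argument.
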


\begin{proof}[Proof (computational)]
    We tested all $1{,}620{,}470$ nonisomorphic biconnected, nonplanar
    cubic graphs of order $24$ and girth at least five.
    For every such graph $G$ and every set $F\subseteq E(G)$ with
    $|F|\leq2$, the computation found a 1-planar drawing of $G$ in which
    every edge of $F$ is uncrossed.
\end{proof}

\begin{fact}\label{clm:flexible26}
    Every biconnected, nonplanar cubic graph with $n=26$ and girth $\geq 5$
    is $1$-flexible.
\end{fact}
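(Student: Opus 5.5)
This fact is established computationally, but the instance count is reduced by the 5-cycle reductions. The table's ``cores'' and ``fallback'' rows suggest the structure. Let $G$ be a biconnected, nonplanar cubic graph of order $26$ and girth at least five, and let $F=\{f\}$ be a single prescribed edge. We must find a 1-planar drawing of $G$ in which $f$ is uncrossed.

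\textbf{Girth at least six.} Here $G$ contains no 5-cycle, so no reduction applies. We test $G$ directly for 1-flexibility with \oops. For each edge orbit under $\mathrm{Aut}(G)$, we fix a representative edge to be uncrossed and search for a crossing pattern avoiding it whose planarization is planar. These graphs are relatively few (compare the girth $\geq 6$ counts in \cref{table:cubic}), so direct testing is affordable.

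\textbf{Girth exactly five.} Here we exploit \cref{lem:5-cycle-star-expansion}. For a 5-cycle $C$, the vertex reduction $H$ has order $22$ and one vertex of degree five. The reduced graphs $H$, the ``cores,'' can be shared across many $G$. For each $G$ and each edge $f$, we aim to pick a 5-cycle $C$ and certify $f$ through $H$:
\begin{itemize}
\item If $f$ is incident with a vertex $v_i$ of $C$, part~(i) applies. It suffices that $H$ has a drawing with all five attachments uncrossed. This is a 5-edge constraint on a 22-vertex graph, which the solver checks easily.
\item Otherwise $f$ corresponds to an edge $e$ of $H$, and part~(ii) applies. It suffices that $H$ has a drawing with $e$ and three further attachments uncrossed.
\end{itemize}
Since each edge of a girth-five graph typically lies on or near many 5-cycles, we try several cycles per edge. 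We compute, per core $H$, which constraint sets are realizable, and cache the results by isomorphism class of the core together with the labeled attachment vertex. We then propagate the answers back to every $G$ whose edges are all certified.

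\textbf{Fallback.} The edges, and hence graphs, not certified by any 5-cycle reduction (the $9{,}287$ fallback instances) are tested directly. For each such edge $f$, we run \oops with $f$ required to be uncrossed, using automorphisms to test only one edge per orbit.

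\textbf{Main obstacle.} The hard part is organizing the core computation so that it covers all of $G$ efficiently. Enumerating $G$, choosing good 5-cycles, and deduplicating the resulting cores up to isomorphism with labeled attachments is delicate. We must also ensure soundness: the contracted vertex makes $H$ non-simple-cubic, and $H$ may have multi-edges if two $x_i$ coincide. Girth five excludes this, since coinciding $x_i$ would create a cycle of length at most four. Finally, we must make sure the fallback set stays small enough to test directly. If many edges fail, I would first add the path reduction of \cref{lem:5-cycle-path-expansion} as a second certificate before falling back.
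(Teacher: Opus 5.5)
Your proposal follows the paper's route: girth-$\geq 6$ graphs are tested directly, and girth-five graphs are handled through vertex reductions to order-22 cores via parts~(i) and~(ii) of \cref{lem:5-cycle-star-expansion}, with direct fallback tests when core certificates are not found. The paper simplifies the bookkeeping by choosing one 5-cycle per graph and noting that these core conditions do not depend on how the attachments are assigned to cycle vertices, so one test per core certifies all its parents. One caveat: using \cref{lem:5-cycle-path-expansion} as an extra certificate, as you suggest, would need a strengthened version of that lemma, since as stated it yields only 1-planarity of $G$, not a drawing with a prescribed edge uncrossed.
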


\begin{proof}[Proof (computational)]
    There are $31{,}478{,}465$ nonisomorphic biconnected, nonplanar cubic graphs with
    $n=26$ and girth at least five.
    We first consider the $181{,}227$ graphs of girth at least six,
    which we tested directly.
    For every such graph $G$ and every edge $e\in E(G)$, the computation
    found a 1-planar drawing of $G$ in which $e$ is uncrossed.

    It remains to consider graphs of girth five. Since there are
    $31{,}297{,}238$ such graphs, testing their $1$-flexibility directly
    would be too expensive. Instead, we applied vertex reductions to every
    graph and grouped instances that produced isomorphic reduced graphs.
    By \cref{lem:5-cycle-star-expansion}, suitable drawings
    of one reduced graph prove the $1$-flexibility of every graph obtained by
    expanding its degree-five vertex into a 5-cycle. We tested the reduced
    graph shared by each group; if the required drawings were not found,
    we tested all possible expansions directly. Next we discuss the process
    in detail.

    For each graph $G$ of girth five, we chose a 5-cycle $C$, applied the vertex reduction
    to $C$, and denoted the resulting order-22 graph by $H$. Since $G$ has
    girth five, the five neighbors of the contracted vertex are distinct.
    After removing duplicate reduced graphs, we obtained $4{,}700{,}649$
    distinct graphs, which we call \df{cores}.
    Fix a core $H$, and let $S\subseteq E(H)$ be the set of its five edge
    attachments.  A \df{parent} of $H$ is a cubic graph obtained by replacing the
    degree-five vertex of $H$ with a 5-cycle and attaching the edges in $S$
    bijectively to its vertices.
    By construction, every original graph $G$ is a parent of one of the
    cores.

    We now show that every parent of a core $H$ is $1$-flexible:
    for each edge, we need a 1-planar drawing in which it is uncrossed.
    Each edge either belongs to the inserted 5-cycle or is inherited from $H$.

    \begin{enumerate}[(a)]
        \item Consider an edge of the inserted 5-cycle.  By part~(i) of
        \cref{lem:5-cycle-star-expansion}, it suffices to find a 1-planar
        drawing of $H$ in which all five attachments are uncrossed.  The
        computation searched for a crossing pattern $M$ such that $H_M$ is
        planar and $M$ avoids $S$.

        \item Consider the edge corresponding to $e\in E(H)$. By part~(ii)
        of \cref{lem:5-cycle-star-expansion}, it suffices to find a 1-planar
        drawing of $H$ in which $e$ and at least three attachments other than
        $e$ are uncrossed. For every $e\in E(H)$, the computation searched
        for a crossing pattern $M$ such that $H_M$ is planar, $M$ avoids $e$,
        and $M$ avoids at least three attachments different from $e$.
    \end{enumerate}

    We allowed $120$ seconds to process each core. For most cores, the solver
    found all the required patterns within this limit; see
    \cref{tab:cubic28-runtime}. For a few cores, however, the computation
    timed out before finding all required patterns. Such timeouts may occur when a required
    pattern does not exist. Note, however, that the conditions in
    \cref{lem:5-cycle-star-expansion} are sufficient but not necessary for
    the parents to be $1$-flexible. We therefore tested the parents of these
    cores directly; these tests are listed as \emph{fallbacks} in the table. Up to
    rotation and reflection, there are $(5-1)!/2=12$ cyclic orders of the
    five attachments around the inserted 5-cycle.  The computation
    constructed a parent for each of these 12 orders and verified the $1$-flexibility of every
    resulting parent that is biconnected, nonplanar, and has girth at least
    five.

    Hence every graph in the input family is $1$-flexible.
\end{proof}

\begin{fact}\label{clm:girth5-28}
    Every biconnected, nonplanar cubic graph with $n=28$ and girth $5$ is
    1-planar.
\end{fact}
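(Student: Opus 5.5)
The proof will be computational and follow the same core-based strategy as \cref{clm:flexible26}, with target property $0$-flexibility (1-planarity) instead of $1$-flexibility. The relevant family is large, since girth-five cubic graphs of order $28$ number in the hundreds of millions. We will therefore not test the graphs themselves. Instead, we apply reductions to $5$-cycles and test the smaller reduced graphs, resolving every parent of a reduced graph at once. The table entries labeled ``classification'', ``reduced instances'', and ``fallback'' for \cref{clm:girth5-28} indicate exactly this three-stage structure.

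First, generate all biconnected, nonplanar cubic graphs of order $28$ and girth exactly five with \texttt{Minibaum}/\texttt{nauty}. For each graph $G$, choose a $5$-cycle $C$ and reduce it. Two reductions are available.
\begin{itemize}
    \item The \emph{vertex reduction} gives a graph $H$ of order $24$ with one degree-five vertex. By part~(ii) of \cref{lem:5-cycle-star-expansion}, applied with $e$ any non-attachment edge, it suffices to find a 1-planar drawing of $H$ in which at least three attachments are uncrossed. Any three attachments of the five include two consecutive ones, which is all the lemma needs.
    \item The \emph{path reduction} gives a cubic graph of order $26$. By \cref{lem:5-cycle-path-expansion}, it suffices to find a drawing with $ab$, $bc$, $bx_2$ uncrossed.
\end{itemize}
The vertex reduction has the advantage that a single reduced graph covers all $12$ attachment orders simultaneously. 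So I would use it as the primary grouping. Canonically label each reduced graph, remove duplicates (the ``classification'' step), and obtain a set of cores.

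Second, for each core $H$ with attachment set $S$, search for a crossing pattern $M$ such that $H_M$ is planar and $M$ avoids at least three edges of $S$. This can be encoded in \oops by adding a cardinality constraint over the ``uncrossed'' literals of the five attachments. If such a pattern is found, every parent of $H$ is 1-planar by \cref{lem:5-cycle-star-expansion}(ii). Cores that time out or turn out unsatisfiable proceed to a fallback. There, we construct the $12$ parents corresponding to the cyclic attachment orders, discard those that are planar, not biconnected, or of girth $\neq 5$, and test each remaining parent for 1-planarity directly. Optionally, before the direct test we could try the path reduction for another $5$-cycle as an intermediate fallback. Since every input graph is a parent of some core, this covers the whole family.

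The main obstacle is the fallback volume. The lemma's condition is only sufficient, and the table suggests roughly $1.5$ million direct order-$28$ tests. Keeping this manageable depends on choosing the $5$-cycle well. My first attempt would be to iterate over all $5$-cycles of $G$, or a few of them, and pick one whose core is already resolved. Failing that, I would pick the canonically smallest core to maximize sharing. Correctness additionally relies on double-checking that each reported pattern really yields a planar $H_M$ via an independent planarity test.
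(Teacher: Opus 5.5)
Your argument is logically sound, but it takes a genuinely different route from the paper.

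One small repair is needed. Part~(ii) of \cref{lem:5-cycle-star-expansion}, as stated, also requires $e$ itself to be uncrossed. A drawing of the core with three uncrossed attachments need not contain any uncrossed non-attachment edge, so you cannot apply the statement ``with $e$ arbitrary.'' What you actually use is its proof: inside the disk only the two consecutive attachments $s_0,s_1$ gain crossings. Hence three uncrossed attachments alone already give 1-planarity of every parent.

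The paper does not use vertex reductions for this fact. It cascades four constructions:
\begin{enumerate}[(a)]
    \item A double 5-cycle reduction to a cubic order-24 graph. The needed drawing ($pq,qr$ uncrossed) is supplied for free by \cref{clm:flexible24}, which disposes of about 254 million graphs with no new SAT calls.
    \item Overlap quotients of order 21 with a degree-six vertex whose edges must all be uncrossed, backed by computer-found local gadgets for all 60 boundary orders.
    \item The same with order-20 quotients and a degree-seven vertex, with gadgets for 355 of the 360 boundary orders.
    \item Path reductions grouped by $(H,b)$ via \cref{lem:5-cycle-path-expansion}.
\end{enumerate}

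The payoff is sharing: the remaining 398 million graphs collapse to about $3.8$ million reduced instances. In your scheme a core has at most 12 nonisomorphic parents. You therefore face at least $652{,}157{,}758/12\approx5.4\cdot10^7$ order-24 cores, and, judging by the ratio in \cref{clm:flexible26}, more likely about $10^8$. That is an order of magnitude more solver calls.

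In exchange, your route is simpler. It rests only on an already proved lemma, with no new local gadgets to certify. Its per-core condition is also weaker than the one used in \cref{clm:flexible26}, so you can expect fewer timeouts. Finally, in the fallback it suffices to test the graphs actually assigned to a failed core rather than all 12 parents.
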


\begin{proof}[Proof (computational)]
    There are $652{,}157{,}758$ nonisomorphic biconnected, nonplanar cubic
    graphs of order $28$ and girth five. Since this family is too large
    to test directly, we applied the reductions described below.
    Each reduction produces a graph with fewer vertices, and we explain
    how suitable drawings of that graph yield a 1-planar drawing of the
    original graph. Only a small fraction of the instances remained
    unresolved by these reductions; we tested them directly.

    Fix a graph $G$ in the family. For each 5-cycle $C=(v_0,\ldots,v_4)$
    of $G$, consider its path reductions; see \cref{fig:5-cycle-reduction}.
    Let $H$ denote a resulting graph and $(a,b,c)$ its replacement path.
    Since $G$ has girth five, $H$ is a simple cubic graph of order $26$.
    We consider the following reductions in order.
    \begin{enumerate}[(a)]
        \item If, for some $H$, a second 5-cycle $D$ containing the entire
        path $(a,b,c)$ has a path reduction $H'$ that is biconnected and has girth
        at least five, write $(p,q,r)$ for its replacement path and require
        that the edge incident with $b$ outside $D$ becomes incident with $q$.
        Assign $G$ to one such \emph{double 5-cycle reduction}, if one exists.
        The graph $H'$ is cubic and has order $24$. If it is planar, it has
        a drawing in which $pq$ and $qr$ are uncrossed; otherwise,
        \cref{clm:flexible24} gives a 1-planar drawing with both edges
        uncrossed.

        We reconstruct a drawing of $G$ from this drawing of $H'$ as follows.
        The two vertices of $D$ other than $a,b,c$ correspond to vertices
        of $G$ outside $C$. Since $G$ has
        girth five, they cannot attach to consecutive vertices of $C$.
        By rotating or reversing the labels of $C$, we may therefore assume
        that the subgraph to be restored consists of the cycle $C$,
        vertices $x_0,x_3$, and edges $v_0x_0,v_3x_3,x_0x_3$.
        Choose a small disk around the uncrossed path $(p,q,r)$ that
        contains no crossings and meets its five attachments at distinct
        boundary points. Up to cyclic rotation and reflection, these
        attachments have the four boundary orders shown in
        \cref{fig:overlapping-5-cycle-expansion}. The figure shows how to
        reconstruct the seven-vertex subgraph inside the disk for each
        order, leaving the drawing outside unchanged. Every edge of the
        restored subgraph is crossed at most once, and no attachment gains
        a crossing. Thus the resulting drawing of $G$ is 1-planar.

        \begin{figure}[!t]
            \centering
            \includegraphics[width=\linewidth]{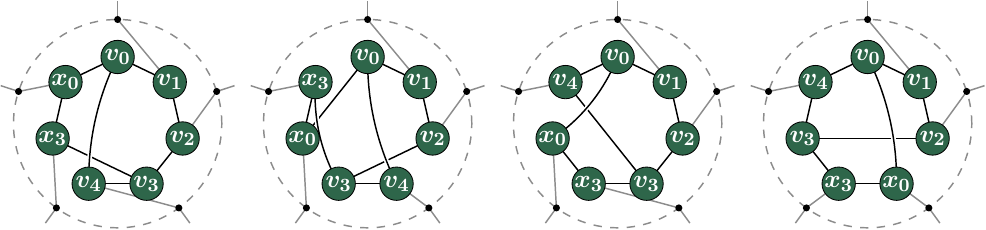}
            \caption{Reconstructing the cycle $(v_0,\ldots,v_4)$ and the
            path $(v_0,x_0,x_3,v_3)$ after the double 5-cycle reduction,
            for the four attachment orders. Each edge is crossed at most
            once inside the disk, and no attachment gains a crossing. Black dots
            mark where attachments meet the dashed disk boundaries.}
            \label{fig:overlapping-5-cycle-expansion}
        \end{figure}

        \item If no double 5-cycle reduction applies, search all the graphs
        $H$ for a 5-cycle $D$ of the form
        \(
            D=(b,a,u_0,u_1,m),
        \)
        where $au_0$ and $bm$ are the edges of $D$ incident with $a$ and
        $b$, respectively, that do not belong to $(a,b,c)$.  Contract
        $V(D)\cup\{c\}$ to a single vertex $z$.  If this produces a simple
        connected graph, we call it a quotient. The
        quotient has order 21, the vertex $z$ has degree six, and every
        other vertex has degree three. If such a degree-six quotient exists,
        assign $G$ to one; see \cref{fig:fact4-overlap-quotients}.

        To establish 1-planarity of $G$, it suffices to find a 1-planar
        drawing of the quotient in which all six edges incident with $z$
        are uncrossed. Choose a small disk around $z$ that contains no
        crossings and meets these edges at distinct boundary points.
        Consider their cyclic order around the disk boundary.
        Up to cyclic rotation and reflection, there are $(6-1)!/2=60$
        boundary orders. For each order, we found a 1-planar drawing
        inside the disk of the subgraph of $G$ removed by the two reductions,
        with its six attachments in that order and uncrossed.
        Inserting the corresponding drawing leaves the drawing outside
        the disk unchanged and yields a 1-planar drawing of $G$.

        \item If neither preceding construction applies, repeat the quotient
        construction using a 6-cycle $D=(b,a,u_0,u_1,u_2,m)$.
        The quotient must again be simple and connected; it has order 20,
        the vertex $z$ has degree seven, and every other vertex has degree
        three. If such a degree-seven quotient exists, assign $G$ to one;
        see \cref{fig:fact4-overlap-quotients}.

        As in the degree-six case, we sought a 1-planar drawing of the
        quotient in which all edges at $z$ are uncrossed.
        Here, however, we found reconstructions for only some boundary orders. Up to cyclic
        rotation and reflection, there are $(7-1)!/2=360$ boundary orders.
        For 355 orders, we found a 1-planar
        drawing of the restored subgraph inside the disk, with its seven
        attachments uncrossed and in the specified order.

        The correspondence between the seven edges at $z$ and the seven
        attachments of the restored subgraph may differ among graphs
        assigned to the same quotient. For every such correspondence,
        we searched for a 1-planar drawing of the quotient in which all
        seven edges at $z$ are uncrossed and their boundary order is one
        of these 355 orders. Whenever such a drawing was found, inserting
        the corresponding local drawing left the drawing outside the disk
        unchanged and yielded a 1-planar drawing of the assigned graph $G$.

        \item If none of the preceding constructions applies, choose one
        path reduction $H$ of a 5-cycle of $G$, with replacement path
        $(a,b,c)$. Group together graphs whose reduced pairs $(H,b)$ are
        isomorphic, with the isomorphism preserving the distinguished vertex.
        The pair $(H,b)$ identifies this
        group and specifies the test shared by its graphs.

        For each group, we searched for a 1-planar drawing of $H$ in which
        the three edges incident with $b$ are uncrossed. For every graph
        $G$ in the group, these edges correspond to $ab$, $bc$, and $bx_2$.
        Whenever such a drawing was found, \cref{lem:5-cycle-path-expansion}
        yielded a 1-planar drawing of every graph in the group by restoring
        its original 5-cycle.
    \end{enumerate}

\begin{figure}[!t]
    \centering
    \includegraphics[width=0.95\linewidth]{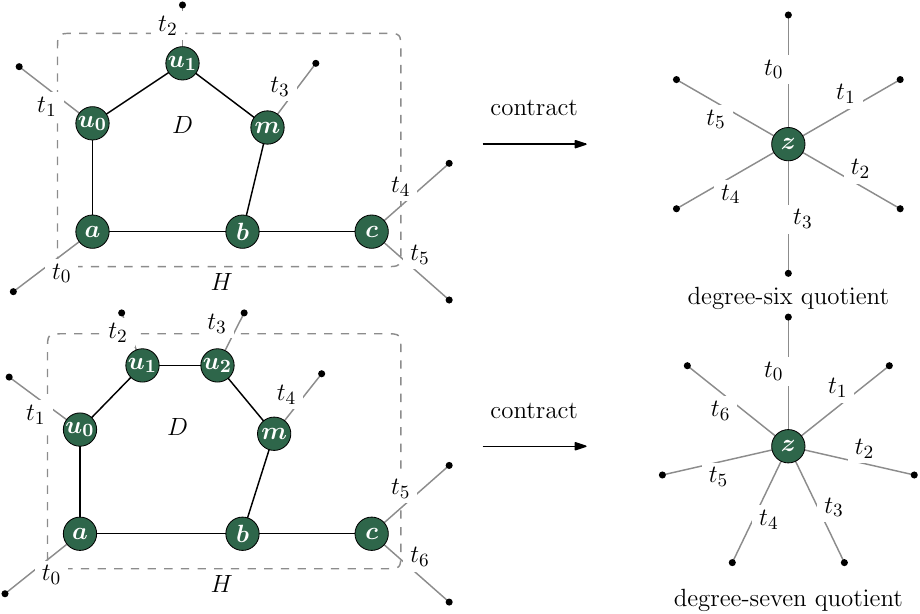}
    \caption{The overlap quotients used in the proof of
        \cref{clm:girth5-28}.  Each dashed region is contracted to $z$; the
        edges $t_i$ join the region to the rest of the graph.  In a drawing of
        the quotient, the cyclic order of these edges around $z$ is its boundary
        order.}
    \label{fig:fact4-overlap-quotients}
\end{figure}

    The double reduction (a) handled $254{,}354{,}420$ graphs. We assigned
    $350{,}383{,}009$ graphs to degree-six quotients (b), $45{,}970{,}275$
    to degree-seven quotients (c), and the remaining $1{,}450{,}054$ to
    pairs $(H,b)$ in case (d). After removing isomorphic duplicates from
    the latter three groups, preserving the distinguished vertex in
    pairs $(H,b)$, we obtained $3{,}788{,}793$
    \emph{reduced instances}. For each instance, we searched for the
    drawing or drawings required by its case above.
    We allowed $60$ seconds per reduced instance. If the computation
    timed out before finding all required drawings, we tested every
    original graph assigned to that instance directly. These
    $1{,}522{,}464$ graphs are listed as \emph{fallbacks} in
    \cref{tab:cubic28-runtime}; the computation found a 1-planar drawing
    of each. Thus every graph in the input family is 1-planar.
\end{proof}

\begin{fact}\label{clm:girth6-28}
    Every biconnected, nonplanar cubic graph with $n=28$ and girth $\geq 6$ is
    1-planar.
\end{fact}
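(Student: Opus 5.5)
This fact is verified computationally and directly, without reductions, since the family is comparatively small. The first step is to generate all connected cubic graphs of order $28$ and girth at least six with \texttt{Minibaum}. We then discard those that are not biconnected or are planar, using standard \texttt{nauty} filters. By \cref{table:cubic}, this leaves at most $4{,}624{,}501$ graphs; in fact the count is exactly that, matching \cref{tab:cubic28-runtime}. No edges are required to remain uncrossed here, so each instance only needs a plain 1-planarity certificate.

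For each graph $G$ we run \oops in its satisfiability-oriented configuration, without the separating-cycle propagator or the symmetry case split, since both are designed for unsatisfiable instances. Whenever the solver reports a satisfying assignment, we extract the crossing pattern $M$ from the merge variables. We then certify the answer independently by checking two things: that $M$ consists of pairs of independent edges with no edge repeated, and that the planarization $G_M$ is planar, using a linear-time planarity test. By the remark in \cref{sect:oops}, planarity of $G_M$ yields a 1-planar drawing of $G$. The correctness of the fact therefore rests only on these simple checks and not on the SAT solver.

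The computation is distributed across the 48 cores, one graph per process, with a per-instance time limit. Any instance that times out is rerun with a longer limit, or with a different random seed or variable ordering. If necessary, we also use the case split of \cref{sect:symmetry-breaking} in the satisfiable direction: we force a chosen edge orbit to be uncrossed and try the resulting cases in parallel.

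The main obstacle I expect is the running time, not correctness. The paper's earlier data show that girth-$\geq 6$ instances of order $28$ averaged about $1.7$ seconds each, far above typical instances. The hardest graphs, those with girth close to eight or with high symmetry, are near the non-1-planar threshold at order $30$, so the solver may struggle on a small tail of instances. If some instance resists, I would fall back to a reduction. One option is to pick a $6$-cycle and look for a 1-planar drawing of a contracted quotient with prescribed uncrossed attachments, in the spirit of \cref{lem:5-cycle-star-expansion}. Another is to use the backtracking solvers of \cite{BDM23,MFPR26}, which find drawings quickly. I expect the direct search to succeed for every graph, giving the claimed 69 hours of wall-clock time.
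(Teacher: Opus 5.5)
Your plan matches the paper's proof. The paper establishes this fact purely by direct computation: it tests all $4{,}624{,}501$ biconnected, nonplanar cubic graphs of order $28$ and girth at least six, with no reductions, and finds a 1-planar drawing for every one. Your extra safeguards (independently checking that $G_M$ is planar, and the fallback strategies for timeouts) are sound additions; the paper does not report needing any fallback for this family.
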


\begin{proof}[Proof (computational)]
    We tested all $4{,}624{,}501$ nonisomorphic biconnected, nonplanar
    cubic graphs of order $28$ and girth at least six.
    The computation found a 1-planar drawing of every such graph.
\end{proof}

Now we are ready to complete the proof of the main result in the section.

\begin{proof}[Proof of~\cref{thm:cubic28}]
    Recall that $\mathcal P_k(n)$ denotes the statement that every
    cubic graph of order at most $n$ is $k$-flexible.
    Next we establish $\mathcal P_3(22)$, $\mathcal P_2(24)$,
    $\mathcal P_1(26)$, and $\mathcal P_0(28)$ in this order.

    \begin{itemize}
        \item \emph{$\mathcal P_3(22)$.}
        Since every planar graph is $3$-flexible,
        \cref{lem:flexibility-core,clm:flexible22} imply
        $\mathcal P_3(22)$.

        \item \emph{$\mathcal P_2(24)$.}
        Applying \cref{lem:flexibility-reduction} with
        $\mathcal P_3(22)$ and \cref{clm:flexible24} gives
        $\mathcal P_2(24)$.

        \item \emph{$\mathcal P_1(26)$.}
        Applying \cref{lem:flexibility-reduction} with
        $\mathcal P_2(24)$ and \cref{clm:flexible26} gives
        $\mathcal P_1(26)$.

        \item \emph{$\mathcal P_0(28)$.}
        Applying \cref{lem:flexibility-reduction} with
        $\mathcal P_1(26)$ and \cref{clm:girth5-28,clm:girth6-28} gives
        $\mathcal P_0(28)$.
    \end{itemize}

    The last statement is exactly \cref{thm:cubic28}, since
    $0$-flexibility is equivalent to 1-planarity.
\end{proof}

\section{Discussion}

Are the Tutte-Coxeter and Byte graphs \emph{the only} cubic non-1-planar
graphs of order 30? We conjecture that they are. Our experiments suggest
that non-1-planarity is more common among cubic graphs of higher girth.
All 546 connected cubic graphs of this order and girth at least seven
have been verified: these two graphs are the only non-1-planar
instances among them. However, extending this classification to all
cubic graphs of order 30 remains challenging. A direct exhaustive test
would involve approximately 845 billion connected graphs, and the tools
developed in \cref{sect:lb} are not sufficient to make such a test practical.

Finally, we emphasize that
the computational complexity of recognizing \emph{cubic} 1-planar graphs remains open.
For comparison, recognizing 1-planar graphs is \NP-complete for graphs
of maximum degree $20$~\cite{CM13}.
Our results might provide useful building blocks for a hardness proof
for recognizing cubic 1-planar graphs.

\paragraph{Acknowledgements}
The author thanks Michael Kaufmann for pointing out in a private communication
that cube-connected cycles provide examples of cubic non-1-planar graphs.

\bibliographystyle{abbrv}
\bibliography{cubic}

\end{document}